\newtheorem{theorem}{Theorem}[section]
\newtheorem{lemma}[theorem]{Lemma}
\newtheorem{corollary}[theorem]{Corollary}
\theoremstyle{definition}
\theoremstyle{remark}
\newcounter{smalllist}
\DeclareMathOperator{\diam}{diam}
\numberwithin{equation}{section}
\newcommand{\lb}{\label}
\newcommand{\beq}{\begin{equation}}
\newcommand{\eeq}{\end{equation}}
\newcommand{\bal}{\begin{align}}
\newcommand{\eal}{\end{align}}
\newcommand{\bals}{\begin{align*}}
\newcommand{\eals}{\end{align*}}
\newcommand{\bbN}{{\mathbb{N}}}
\newcommand{\bbR}{{\mathbb{R}}}
\newcommand{\bbP}{{\mathbb{P}}}
\newcommand{\bbE}{{\mathbb{E}}}
\newcommand{\bbZ}{{\mathbb{Z}}}
\newcommand{\bbT}{{\mathbb{T}}}
\newcommand{\calB}{{\mathcal B}}
\newcommand{\calI}{{\mathcal I}}
\newcommand{\eps}{\varepsilon}
\newcommand{\del}{\delta}
\newcommand{\tht}{\theta}
\newcommand{\til}{\tilde}
\newcommand{\ce}{c^*_e}
\newcommand{\diff}{D}
\newcommand{\diffe}{\diff_e}
\newcommand{\Om}{\Omega}
\newcommand{\tilOm}{{\tilde\Omega}}
\begin{document}
\title[Reaction-Diffusion Front Speed Enhancement]
{Reaction-Diffusion Front Speed Enhancement by Flows}

\author{Andrej Zlato\v s}

\address{\noindent Department of Mathematics \\ University of
Chicago \\ Chicago, IL 60637, USA \newline Email: \tt
zlatos@math.uchicago.edu}


\begin{abstract}
We study flow-induced enhancement of the speed of pulsating traveling fronts for reaction-diffusion equations, and quenching of reaction by fluid flows. We prove, for periodic flows in two dimensions and any combustion-type reaction, that the front speed is proportional to the square root of the (homogenized) effective diffusivity of the flow. We show that this result does not hold in three and more dimensions. We also prove conjectures from \cite{ABP,Berrev,FKR} for cellular flows, concerning the rate of speed-up of fronts and the minimal flow amplitude necessary to quench solutions with initial data of a fixed (large) size.
\end{abstract}

\maketitle

\section{Introduction and the Main Results} \lb{S1}

It is well known that the presence of a fluid flow can significantly increase mixing properties of diffusion.  The study of this phenomenon, sometimes called {\it eddy diffusivity}, has been the aim of a large body of mathematical and physical literature. Questions of  long time--large scale behavior are usually addressed via techniques of homogenization theory (see, e.g., \cite{JKO,KM} and references therein). This approach is appropriate when one can wait a long time for mixing to take effect. The presence of other processes, however, may introduce additional time scales to the problem. One such process is reactive combustion, which happens on short time scales and therefore requires a different approach to the study of combustive mixing.

The effects of flows on combustion have recently been studied by various authors, both qualitatively and quantitatively \cite{ABP,Berrev,BHN-2,CKOR,CKR,CKRZ,FKR,Heinze,KRS,KS,KR,KZ,NR,RZ,VCKRR,ZlaArrh,ZlaPersym,ZlaMix,ZlaPercol}. The main effects are two-fold. A strong flow can speed up propagation of a reaction (such as a wind spreading a fire) but also extinguish it (the ``try to light a match in the wind'' effect). The models used are reaction-advection-diffusion equations, in which the first phenomenon is manifested by the enhancement of speed of their (pulsating) traveling front solutions, and the second by quenching of solutions with (large) compactly supported initial data.

In the present paper we study both these effects for stationary periodic flows. We consider the reaction-advection-diffusion equation
\beq \lb{1.1}
T_t + u(x)\cdot\nabla T = \Delta T + f(T)
\eeq
for the (normalized) temperature $T(t,x)\in[0,1]$ of  a premixed combustible gas, with $(t,x)\in \bbR\times \bbR^d$. The gas is advected by a periodic incompressible (i.e., $\nabla \cdot u\equiv 0$)  mean-zero vector field $u\in C^{1,\del}(\bbR^d)$ (also calld {\it flow}). For the sake of simplicity, we will assume that all periods of $u$ are 1, that is, the cell of periodicity is the $d$-dimensional torus $\bbT^d\equiv[-\tfrac 12,\tfrac 12]^d$, with $-\tfrac 12$ and $\tfrac 12$ identified. The general periodic case is identical. The non-negative {\it reaction function} $f\in C^{1,\del}([0,1])$ accounts for the increase of temperature due to a chemical reaction such as burning, and is of the {\it combustion type}. That is, there is $\tht\in[0,1)$ such that $f(s)=0$ for $s\in[0,\tht]\cup\{1\}$,  $f(s)>0$ for $s\in(\tht,1)$, and $f$ is non-increasing on $(1-\del,1)$ for some $\del>0$. If $\tht>0$, then $f$ is an {\it ignition reaction} (with {\it ignition temperature} $\tht$), otherwise $f$ is a {\it positive reaction}. A special case of the latter is the {\it Kolmogorov-Petrovskii-Piskunov (KPP) reaction} \cite{KPP} with $0<f(s)\le sf'(0)$ for all $s\in(0,1)$.

A {\it pulsating traveling front} in the direction of a unit vector $e\in\bbR^d$ is a solution of \eqref{1.1} of the form $T(t,x)=U(x\cdot e-ct,x)$, with $c\in\bbR$ the {\it front speed} and $U:\bbR\times\bbR^d\to[0,1]$ periodic in the second variable such that
\beq \lb{1.2}
\lim_{s\to-\infty}U(s,x)=1 \qquad\text{and}\qquad
\lim_{s\to+\infty}U(s,x)=0,
\eeq
uniformly in $x\in\bbR^d$. It is well known that under our assumptions on $u$ and $f$, for each $e\in\bbR^d$ there is a unique $\ce(u,f)>0$ such that a pulsating traveling front in direction $e$ and with speed $c$ exists if and only if $c=\ce(u,f)$ for ignition reactions \cite{Xin3}, resp. $c\in[\ce(u,f),\infty)$ for positive reactions \cite{BH}. 

In both cases we will be interested in the fronts with the {\it unique/minimal speeds} $\ce(u,f)$. These are the most physical ones because they determine the speed of spreading of solutions to the Cauchy problem for \eqref{1.1} with (large enough) compactly supported initial data. 
An exact formula for $\ce(u,f)$ has been obtained for general reactions in \cite{ElSmaily} (and for the special case of KPP reactions also earlier in \cite{BHN-1}). Unfortunately, it is a complicated variational expression and it is not obvious how to use it to obtain simple general estimates on $\ce(u,f)$. 

Our goal is to derive simple estimates on the front speed $\ce(u,f)$, and use them to answer open questions from \cite{ABP,Berrev,FKR} concerning speed-up of pulsating fronts and quenching of reaction by strong flows. We will provide such estimates in two dimensions, in terms of the size of $f$ and the {\it effective diffusivity} $\diffe(u)$ of the flow $u$ (in the direction $e$). The latter quantity can be found in a much simpler way than $\ce(u,f)$ using \eqref{1.5} and \eqref{1.6} below. It appears in the homogenization theory which, as mentioned above, is applicable to the study of long time behavior of the solutions of the related linear PDE
\beq \lb{1.3}
\psi_t+u(x)\cdot\nabla \psi=\Delta \psi.
\eeq
Despite the fact that the presence of reaction introduces a short time scale to the model, we will be able to show by other methods that in two dimensions (but not in three!), the effective diffusivity determines the front speed up to a bounded factor.

The long time behavior of the solutions of \eqref{1.3} is governed by the {\it effective diffusion equation}
\beq\label{1.4}
\bar \psi_t = \nabla \cdot(\diff(u) \nabla  \bar \psi),
\eeq
where the ($x$-independent positive symmetric) {\it effective diffusivity matrix} $\diff(u)$ is
obtained as follows. For any $e\in\bbR^d$, let $\chi_e(x)$ be the
periodic mean-zero solution of the cell problem
\beq\label{1.5}
-\Delta\chi_e+u\cdot\nabla\chi_e=u\cdot e
\eeq
on $\bbT^d$. Then $\diff(u)$ is given by
\[
e\cdot\diff(u)e' = \int_{\bbT^d}(\nabla\chi_e + e)\cdot(\nabla\chi_{e'} + e') dx 
= e\cdot e' + \int_{\bbT^d}\nabla\chi_e\cdot\nabla\chi_{e'} dx
\]
for any $e,e'\in\bbR^d$. The effective spreading in the direction of a unit vector $e\in\bbR^d$
is now governed by the {\it effective diffusivity}
\beq\label{1.6}
\diffe(u)\equiv e\cdot\diff(u)e = 1+\|\nabla\chi_e\|^2_{L^2(\bbT^d)} .
\eeq

When the nonlinearity in \eqref{1.1} is weak (the reaction time scale is large) so that we have
\[
T_t+u(x)\cdot\nabla T=\Delta T+\eps f(T)
\]
with $\eps\ll 1$, the long time--large space scaling $t\mapsto t/\eps^2$, $x\mapsto x/\eps$ gives
\[
T_t+\frac{1}{\eps}u\left(\frac{x}{\eps}\right)\cdot\nabla T=\Delta T+f(T).
\]
The homogenized version of this equation is
\[
\bar T_t=\nabla\cdot(\diff(u)\nabla \bar T)+f(\bar T),
\]
with the unique/minimal front speed in direction $e$ depending on $\diffe(u)$ and $f$.  Although the above approximation holds only on certain space--time scales, it does suggest a relation between $\ce(u,f)$ on one hand and $\diffe(u)$ and $f$ on the other. Our main result confirms this relation in two dimensions:

\begin{theorem}\lb{T.1.1}
Let $f$ be any combustion-type reaction. There are $C_1(f), C_2(f)>0$ such that for any 1-periodic incompressible mean-zero $C^{1,\del}$ flow $u$ on $\bbR^2$ and any unit vector $e\in\bbR^2$,
\beq\lb{1.7}
C_1(f) \sqrt{\diffe(u)} \le \ce(u,f) \le C_2(f) \sqrt{\diffe(u)}.
\eeq
\end{theorem}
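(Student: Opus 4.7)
I would prove the two inequalities in \eqref{1.7} separately, since they require quite different tools.

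\emph{Upper bound.} The first step is to dominate $f$ pointwise by a KPP reaction $g$ (e.g.\ $g(s)=\mu s(1-s)$ with $\mu$ large enough depending on $f$); by the comparison principle for minimal pulsating fronts one then has $\ce(u,f)\le\ce(u,g)$, so it suffices to prove $\ce(u,g)\le C(g)\sqrt{\diffe(u)}$. For KPP reactions one has the characterization $\ce(u,g)=\inf_{\lambda>0}k(\lambda)/\lambda$ of \cite{BHN-1}, where $k(\lambda)$ is determined by the principal periodic eigenvalue of an advection-diffusion operator with a $\lambda$-dependent zeroth-order term. Plugging the test function $\phi=1+\lambda\chi_e$ into the Rayleigh quotient for this eigenvalue and using the cell problem \eqref{1.5} together with the identity \eqref{1.6} should yield $k(\lambda)\le\lambda^2\diffe(u)+g'(0)+\mathrm{(error)}$. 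Optimizing in $\lambda\sim\sqrt{g'(0)/\diffe(u)}$ then gives the claimed $\sqrt{\diffe(u)}$ bound, with constant depending only on $g$ and hence only on $f$.

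\emph{Lower bound.} Here I would exploit the 2D structure essentially. The incompressible field $u$ on $\bbT^2$ admits a single-valued periodic stream function $H$, and $\chi_e$ has a convenient variational expression in terms of $H$. My plan is to test the pulsating front equation for $U(s,x)$ against functions built out of $\nabla\chi_e+e$, and to combine this with incompressibility and Cauchy--Schwarz to obtain an estimate of the schematic form
\[
c\cdot\int f(U)\,dx\,ds \gtrsim \sqrt{\diffe(u)}\cdot(\text{reactive flux}),
\]
where both $f$-dependent integrals are controlled from below uniformly in $u$ by the fact that any front must pass through the reaction zone $\{\tht<U<1\}$, yielding $\ce(u,f)\ge C_1(f)\sqrt{\diffe(u)}$. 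An alternative route would run through the bulk burning rate formalism of \cite{CKOR,CKRZ}, combined with a 2D harmonic-measure estimate on streamlines of $H$.

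\emph{Main obstacle.} The difficult part is clearly the lower bound. Since the theorem is asserted to fail in $d\ge 3$, no purely linear/variational argument can succeed universally; the proof must use the two-dimensional structure essentially, presumably via the existence of a globally single-valued stream function and the codimension-one nature of its level sets, which prevent the front from ``short-circuiting'' the large enhanced diffusion along thin channels. Converting this geometric intuition into a sharp quantitative bound --- and, crucially, ensuring the constant depends only on $f$ and is insensitive to $u$ even when $\diffe(u)$ is very large --- is where the 2D hypothesis really enters, and is the crux of the argument.
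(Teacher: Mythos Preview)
Your upper bound is correct and is essentially the paper's argument: dominate $f$ by a KPP reaction and invoke the KPP bound $\ce(u,g)\le C(g)\sqrt{\diffe(u)}$ (which is proved in \cite{RZ} along the lines you sketch).

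Your lower-bound strategy, however, is not the paper's, and as written it has a genuine gap. Testing the front equation for $U$ against functions built from $\nabla\chi_e+e$ is a dimension-independent idea; the stream function is mentioned but plays no concrete role in your outline. Since the theorem fails for $d\ge 3$, any argument that does not isolate a precise two-dimensional mechanism cannot close. Your ``schematic'' inequality conflates two things that live on different time scales: $\diffe(u)$ is a long-time (homogenization) quantity, while the $f$-integrals over the front are short-time objects, and Cauchy--Schwarz alone will not bridge that gap with a $u$-independent constant.

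The paper's route is quite different and worth knowing. It has two ingredients. First, a \emph{short-time diffusivity} estimate (Theorem~\ref{T.2.1}, valid in all dimensions): for any $\tau\ge 1$ there exist $x$ and $t\in[0,\tau]$ such that the stochastic process $X_t^x$ associated to \eqref{1.3} satisfies $\bbP_\Omega(|(X_t^x-x)\cdot e|\ge \alpha\sqrt{\tau\diffe(u)})\ge 1-C\alpha$. This is proved by a decorrelation argument for the increments $Z_n^x=(X_n^x-X_{n-1}^x)\cdot e$ using uniform relaxation of the heat kernel on $\bbT^d$. Second, and this is where $d=2$ enters, a \emph{front-width} estimate (Theorem~\ref{T.3.1}): outside a strip of width comparable to $\ce(u,f)$ (times an $f$-dependent constant), connected pieces of the intermediate level sets $\{\zeta+\eps\le T\le\xi-\eps\}$ have diameter at most $\tfrac1{10}$. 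The proof uses the two integral identities \eqref{3.3}--\eqref{3.4} together with a purely planar topological fact: a connected subset of a unit square with diameter $\ge\tfrac1{10}$ has a projection onto one coordinate axis of length $\ge\tfrac1{16}$, which forces a definite amount of $\|\nabla_x T\|_{L^2}$. This fails in $d\ge 3$, where thin tubes can carry large connected level sets with negligible gradient cost.

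The two are then combined via the Feynman--Kac comparison \eqref{2.4}: if $\ce(u,f)$ were too small relative to $\sqrt{\diffe(u)}$, the short-time displacement of $X_t^x$ would push mass across the narrow front region faster than the front itself moves, contradicting the pointwise bounds on $T$ coming from Theorem~\ref{T.3.1}. None of this is visible in your variational sketch; the missing idea is precisely this front-width--versus--speed bound and its coupling to the stochastic displacement.
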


{\it Remarks.}  1. We have the following estimates on the constants in \eqref{1.7}. From the results of \cite{RZ} and monotonicity of $\ce(u,f)$ in $f$ it follows that 
\beq \lb{1.7a}
C_2(f)\le C \sqrt{\left\| f(s)/s \right\|_\infty} \left(1+ \sqrt{\left\| f(s)/s \right\|_\infty} \right)
\eeq
for some $C>0$.  If $m_\zeta(f)=\min\{f(s)\,|\, s\in[\zeta,1-\tfrac{(1-\zeta)^2}8]\}>0$ (for each $f$ as above there is such $\zeta\in(0,1)$), then from the proof of Theorem \ref{T.1.1} it follows that
 \beq \lb{1.7b}
 C_1(f)\ge C_\zeta \frac{\sqrt{m_\zeta(f)}} {1+ \sqrt{m_\zeta(f)}}
 \eeq
for some $C_\zeta>0$. We note that this estimate is optimal up to a constant for small $m_\zeta(f)$ (due to \eqref{1.7a}), but also for large $m_\zeta(f)$. Indeed,  $C_1(f)$ is uniformly bounded above in $f$ for KPP reactions \cite{RZ} and thus for all reactions.
 \smallskip
 
 2. In particular, $C_j(mf)\sim \sqrt m$ ($j=1,2$) as $m\to 0$ for any fixed $f$. 
\smallskip

3. In the case of KPP reactions, this result has been proved in \cite{NR,RZ}. This case is considerably simpler due to the fact that $\ce(u,f)=\ce(u,\til f)$, where $\til f(s)\equiv f'(0)s$ and $\ce(u,\til f)$ is the minimal front speed for the {\it linear equation} \eqref{1.1} with $\til f$ in place of $f$. (Fronts for $\til f$ do not converge to 1 as $x\cdot e\to -\infty$ but rather grow exponentially.)
\smallskip


The relation $\ce(u,f)\sim C(f)\sqrt{\diffe(u)}$ is analogous to that in the case of $u\equiv 0$ and constant diffusivity matrix $\diff>0$. Indeed, spatial scaling $x\mapsto\sqrt\diff x$ shows that the unique/minimal front speed in the direction $e$ for $T_t=\nabla\cdot(\diff\nabla T)+f(T)$ is $C(f)\sqrt{e\cdot\diff e}$ (with $C(f)\equiv \ce(0,f)$ independent of $e$). The problem with $u\not\equiv 0$, however, is that the convergence of solutions of \eqref{1.3} to those of \eqref{1.4} occurs on large time scales, while solutions of \eqref{1.1} can be effectively estimated using \eqref{1.3} on short time scales only.  We will therefore study {\it short time diffusivity} for \eqref{1.3} in Section \ref{S2} and relate it to $\diffe(u)$ in Theorem \ref{T.2.1} below. The theorem, which applies in all dimensions, will be a key step towards overcoming this problem for $d=2$.

The restriction to two dimensions comes from Theorem \ref{T.3.1} below, a relation between the speed and the width of a front. It turns out, in fact, that this result and Theorem \ref{T.1.1} are false in dimensions $d\ge 3$, and Theorem \ref{T.5.1} shows that the bounds in \eqref{1.7} cannot hold with flow-independent $C_1(f)$ and $C_2(f)$ for $d\ge 3$!

This raises an interesting question about large deviations of the stochastic process $X_t^x$ from \eqref{2.1}, corresponding to \eqref{1.3}. If $I_{e,u}$ is the rate function for $Z_t^{x,e}\equiv (x-X_t^x)\cdot e$ (i.e., $\lim_{t\to\infty} t^{-1} \ln \bbP_\Omega(Z_t^{x,e}>ct)=-I_{e,u}(c)$ for $c>0$ and any $x$), then $I_{e,u}(\ce(u,f))=f'(0)$ holds for KPP reactions. From \cite{RZ} we know that in two dimensions we have $\ce(u,f)/\sqrt{\diffe(u)} = 2\sqrt{f'(0)} + O(f'(0)^{3/4})$ for small $f'(0)$ and KPP $f$, with an $(e,u)$-independent error bound. This means that  $I_{e,u}(c)\approx c^2/4\diffe(u)$ for $c\lesssim \sqrt{\diffe(u)}$, in the sense of $4\diffe(u)I_{e,u}(c)/c^2\to 1$ as $c/\sqrt{\diffe(u)}\to 0$, uniformly in $e,u$. That is, effective diffusivity $\diffe(u)$ yields a good approximation of the rate function $I_{e,u}(c)$ for $c\lesssim \sqrt{\diffe(u)}$ in two dimensions. However, Theorem~\ref{T.5.1} shows that this is not true for general flows in more dimensions. At this point we do not know how to explain this difference.

Motivated by a conjecture from \cite{ABP,Berrev} (see Corollary \ref{C.1.3} below), we are particularly interested in the strong flow asymptotics of the unique/minimal speed $\ce(Au,f)$ for
\beq \lb{1.8}
T_t + Au(x)\cdot\nabla T = \Delta T + f(T),
\eeq
with the {\it flow profile} $u$ as above and {\it flow amplitude} $A\in\bbR$ large. A natural question is which flow profiles are able to arbitrarily speed up fronts provided their amplitude is large enough (see \cite{ABP, Berrev, Heinze, KR, NR, RZ, VCKRR, ZlaPersym, ZlaPercol}). Having proved Theorem \ref{T.1.1},  this question in two dimensions becomes equivalent to the question of the asymptotics of $\diffe(Au)$. The latter is much simpler since $\diffe(Au)$ can be computed via \eqref{1.5} and \eqref{1.6} with $Au$ in place of $u$. In particular, it has been proved in \cite{BGW,FP,RZ} (see, e.g., Proposition 1.2 in \cite{RZ}) that in any dimension, $\limsup_{A\to\infty} \diffe(Au)<\infty$ when the equation
\beq \lb{1.9}
u\cdot\nabla\phi_e=u\cdot e
\eeq
has a solution $\phi_e\in H^1(\bbT^d)$, and $\lim_{A\to\infty} \diffe(Au)=\infty$ when \eqref{1.9} has no such solution. We therefore obtain the following characterization.

\begin{corollary} \lb{C.1.2}
Let $u(x)$ be a 1-periodic incompressible mean-zero $C^{1,\del}$ flow on $\bbR^2$, let $e\in\bbR^2$ be a unit vector and $f$ any combustion-type reaction.

(i) If \eqref{1.9} has a solution $\phi_e\in H^1(\bbT^2)$, then
\beq\label{1.10}
\limsup_{A\to \infty}\ce(Au,f)<\infty.
\eeq

(ii) If \eqref{1.9} has no solutions in $H^1(\bbT^2)$, then
\beq\label{1.11}
\lim_{A\to \infty}\ce(Au,f)=\infty.
\eeq
\end{corollary}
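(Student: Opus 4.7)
The proof is essentially a direct consequence of Theorem \ref{T.1.1} combined with the cited results of \cite{BGW,FP,RZ} on effective diffusivity asymptotics. The plan is to apply Theorem \ref{T.1.1} to the rescaled flow $Au$ for each $A$, and then translate the dichotomy for $\diffe(Au)$ into the claimed dichotomy for $\ce(Au,f)$.

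First I would verify that the family $\{Au\}_{A\in\bbR}$ lies in the class of flows for which Theorem \ref{T.1.1} applies: for every $A$, the flow $Au$ is 1-periodic, incompressible (since $\nabla\cdot(Au)=A\nabla\cdot u=0$), mean-zero, and in $C^{1,\delta}(\bbR^2)$. The crucial point is that the constants $C_1(f)$ and $C_2(f)$ in \eqref{1.7} depend only on $f$ and not on the flow, so applying Theorem \ref{T.1.1} to $Au$ yields
\[
C_1(f)\sqrt{\diffe(Au)} \le \ce(Au,f) \le C_2(f)\sqrt{\diffe(Au)}
\]
uniformly in $A\in\bbR$.

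Next I would invoke the effective diffusivity dichotomy recalled in the paragraph preceding the corollary (proved in \cite{BGW,FP,RZ}, see, e.g., Proposition~1.2 in \cite{RZ}): in any dimension $\limsup_{A\to\infty}\diffe(Au)<\infty$ if \eqref{1.9} admits a solution $\phi_e\in H^1(\bbT^d)$, while $\lim_{A\to\infty}\diffe(Au)=\infty$ otherwise. In case (i), the upper bound in the displayed inequality above immediately gives $\limsup_{A\to\infty}\ce(Au,f)\le C_2(f)\limsup_{A\to\infty}\sqrt{\diffe(Au)}<\infty$, proving \eqref{1.10}. In case (ii), the lower bound yields $\ce(Au,f)\ge C_1(f)\sqrt{\diffe(Au)}\to\infty$ as $A\to\infty$, which is \eqref{1.11}.

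There is essentially no obstacle to overcome here; the entire content has already been packaged into Theorem \ref{T.1.1} and the cited characterization of bounded effective diffusivity via solvability of \eqref{1.9}. The only thing worth emphasizing in the write-up is the $u$-independence of $C_1(f),C_2(f)$, which is what allows the two estimates to be used uniformly along the one-parameter family $Au$.
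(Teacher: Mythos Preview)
Your proposal is correct and matches the paper's own argument exactly: the corollary is stated immediately after the discussion explaining that Theorem~\ref{T.1.1} reduces the question to the known dichotomy for $\diffe(Au)$ from \cite{BGW,FP,RZ}, and no further proof is given. Your emphasis on the $u$-independence of $C_1(f),C_2(f)$ is precisely the point that makes the deduction work.
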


{\it Remark.} This result has been proved in \cite{RZ} for KPP reactions but is new for general $f$.
\smallskip

Of particular interest have recently been both {\it percolating} and {\it cellular flows}. 
Percolating flows possess streamlines (solutions of the ODE $X'=u(X)$) joining $x\cdot e =-\infty$ and $x\cdot e =\infty$, a special case being {\it shear flows} $u(x)=v(x_2,\dots,x_d)e_1$. Existence of such streamlines has obviously a strong effect on speed-up of fronts. Cellular flows, on the other hand, possess only closed streamlines, a prototypical example being $u_{\rm cell}(x)=\nabla^\perp (\sin 2\pi x_1 \sin 2\pi x_2)$ whose streamlines are depicted in Figure \ref{fig-cell}.
\begin{figure}[ht!]
 \centerline{\epsfxsize=0.36\hsize \epsfbox{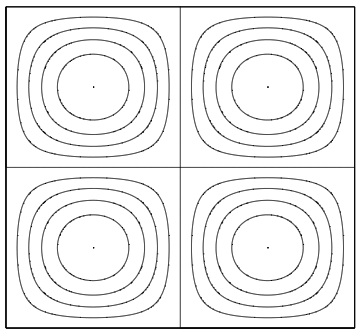}}
 \caption{Streamlines of the cellular flow $u_{\rm cell}$.}
 \label{fig-cell}
\end{figure}
Their effect on speed-up of fronts is therefore more subtle, with diffusion across a thin boundary layer near the flow separatrices playing an important role. The interest in these flows stems from them being ubiquitous in nature. They appear as a result of instabilities in fluids such as Rayleigh-B\' enard instability in heat convection, Taylor vortices in a Couette flow between rotating cylinders, or heat expansion driven Landau-Darrieus instability.

Corollary \ref{C.1.2} shows speed-up of fronts in the sense of \eqref{1.11} for both percolating and cellular flows but known estimates on the effective diffusivity and Theorem \ref{T.1.1} yield more precise asymptotics. For percolating flows 
\beq\label{1.12}
0<\liminf_{A\to\infty} \frac{\ce(Au,f)}A \le \limsup_{A\to\infty} \frac{\ce(Au,f)}A <\infty
\eeq
has been conjectured in \cite{ABP} and later proved for all $f$ in \cite{KR}, and can also be recovered from Theorem 1.2 in \cite{ZlaPercol} and Theorem \ref{T.1.1}. The asymptotic $\ce(Au_{\rm cell},f)\sim A^{1/4}$ for the above cellular flow has also been conjectured in \cite{ABP,Berrev} and obtained in \cite{NR} for KPP reactions, but the best result for general reactions has been $A^{1/5}\lesssim c^*_{e}(Au_{\rm cell},f)\lesssim A^{1/4}$ for $e=e_1$ \cite{KR}. 

Using Theorem \ref{T.1.1} and the estimate $\diffe(Au)\sim A^{1/2}$ from \cite{Kor}, we now obtain the conjectured asymptotic from \cite{ABP,Berrev} for general incompressible periodic cellular flows and all $f$. We also need to assume, as in \cite{Kor}, that the stream function of the flow has only non-degenerate critical points (otherwise the result is not true in general). That is, there is a periodic $C^{2,\delta}$ function $H:\bbR^2\to\bbR$ with only non-degenerate critical points such that $u=\nabla^\perp H \equiv (-H_{x_2},H_{x_1})$ and the complement of the the level set $H=0$ has only bounded connected components ({\it flow cells}). Notice that this allows for almost arbitrary periodic geometry of the flow cells. We will call such $u$ {\it periodic non-degenerate cellular flows}.

\begin{corollary} \lb{C.1.3}
Consider a 1-periodic non-degenrate cellular flow $u$ on $\bbR^2$, let $e\in\bbR^2$ be a unit vector, and $f$ any combustion-type reaction. Then
\beq\label{1.13}
0<\liminf_{A\to\infty} \frac{\ce(Au,f)}{A^{1/4}} \le \limsup_{A\to\infty} \frac{\ce(Au,f)}{A^{1/4}} <\infty.
\eeq
\end{corollary}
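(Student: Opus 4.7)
The plan is to reduce everything to Theorem~\ref{T.1.1} applied to the rescaled flow $Au$, combined with the strong-flow asymptotics for the effective diffusivity of periodic non-degenerate cellular flows on $\bbR^2$ cited from \cite{Kor}. Explicitly, Theorem~\ref{T.1.1} gives constants $C_1(f),C_2(f)>0$, depending only on $f$ (and not on the flow or the direction), such that
\[
C_1(f)\sqrt{\diffe(Au)} \le \ce(Au,f) \le C_2(f)\sqrt{\diffe(Au)}
\]
for every $A>0$. Hence it suffices to show that
\[
0<\liminf_{A\to\infty}\frac{\diffe(Au)}{A^{1/2}}\le\limsup_{A\to\infty}\frac{\diffe(Au)}{A^{1/2}}<\infty,
\]
after which taking square roots and multiplying by the $A$-independent constants $C_1(f), C_2(f)$ yields \eqref{1.13}.

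The key step is therefore the effective diffusivity asymptotic $\diffe(Au)\sim A^{1/2}$. For $u$ a periodic non-degenerate cellular flow on $\bbR^2$, the stream function $H$ has only non-degenerate critical points and the zero-level set $\{H=0\}$ separates the plane into bounded cells. This is precisely the class of flows to which the result of \cite{Kor} applies: it gives the two-sided bound $\diffe(Au)\asymp A^{1/2}$ in every unit direction $e$, the upper bound arising from the classical Childress heuristic that tracer transport happens in a boundary layer of width $A^{-1/2}$ around the separatrices, and the lower bound requiring the non-degeneracy of the critical points of $H$ to control the contribution of the saddles where streamlines meet. The hypothesis that $H$ has only non-degenerate critical points is needed here, since for degenerate stream functions the boundary-layer thickness and the corresponding scaling of $\diffe(Au)$ can change.

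Combining these two ingredients, we obtain $\ce(Au,f)\asymp \sqrt{\diffe(Au)}\asymp A^{1/4}$ with constants that depend only on $f$ and the flow profile $u$, giving \eqref{1.13}. The only real content of the argument beyond Theorem~\ref{T.1.1} is the invocation of \cite{Kor}; the main conceptual obstacle—that reaction introduces a short time scale on which the homogenization limit \eqref{1.4} is not directly applicable—has already been overcome in Theorem~\ref{T.1.1}, which transfers the long-time effective diffusivity bound into a front-speed bound in two dimensions.
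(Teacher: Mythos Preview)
Your proposal is correct and follows exactly the paper's approach: the paper derives Corollary~\ref{C.1.3} directly from Theorem~\ref{T.1.1} together with the estimate $\diffe(Au)\sim A^{1/2}$ for periodic non-degenerate cellular flows from \cite{Kor}. Your added heuristic explanation of why non-degeneracy is needed for the $A^{1/2}$ scaling is accurate and goes slightly beyond what the paper spells out, but the logical structure of the argument is identical.
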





As mentioned above, we also address the closely related question of {\it quenching} of reaction by cellular flows. Quenching occurs in the Cauchy problem for \eqref{1.1} with initial data $T(0,x)=T_0(x)$ when $\|T(t,\cdot)\|_\infty\to 0$ as $t\to\infty$. This happens for ignition reactions when $T_0$ is small in some sense so that $T(\tau,\cdot)$ becomes uniformly smaller than the ignition temperature for some $\tau>0$ (and thus $T$ solves \eqref{1.3} for $t>\tau$ by the maximum principle). But sometimes even large initial data can be quenched with the help of enhanced diffusion due to mixing by strong flows. 

It has been proved in \cite{FKR} that the flow $Au_{\rm cell}$ quenches initial data supported in a strip of width $L$ provided the amplitude $A\gtrsim L^4\ln L$. The authors also conjectured that the factor $\ln L$ can be removed (heuristically, the minimal quenching amplitude should be such that $\ce(Au,f)\sim L$, i.e., $A\sim L^4$). 
We prove this conjecture for all periodic non-degenerate cellular flows on $\bbR^2$, which are also symmetric across the $x_2$ axis (i.e., the stream function $H$ is odd in $x_1$). We will call such periodic non-degenerate flows, which include $u_{\rm cell}$, {\it symmetric}. 

\begin{theorem} \lb{T.1.4}
Consider a $1$-periodic symmetric non-degenrate cellular flow $u$ on $\bbR^2$. There are $\gamma_\tht>0$ (independent of $u$) and  $C_{u,\tht}>0$ such that if $f$ is an ignition reaction with ignition temperature $\tht>0$ and $\|f(s)/s\|_\infty\le \gamma_\tht$, then initial data $T_0(x)\in[0,1]$ supported in $[-L,L]\times\bbR$ are quenched whenever $A\ge C_{u,\tht} L^4$.
\end{theorem}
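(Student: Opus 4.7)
The plan is to combine Theorem~\ref{T.1.1} with the effective-diffusivity asymptotic for cellular flows and a supersolution argument that exploits the symmetry hypothesis. By Theorem~\ref{T.1.1} and the bound \eqref{1.7a} on $C_2(f)$, for $\|f(s)/s\|_\infty\le\gamma_\tht$ small we have
\[
\ce(Au,f)\;\le\;C\sqrt{\gamma_\tht}\,\sqrt{\diffe(Au)}.
\]
For any 1-periodic non-degenerate cellular flow $u$ on $\bbR^2$, the Koralov asymptotic \cite{Kor} (used already in the proof of Corollary~\ref{C.1.3}) gives $\diffe(Au)\le K_u A^{1/2}$ for some $K_u<\infty$ and all $A\ge 1$. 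Taking $A\ge C_{u,\tht}L^4$ with $C_{u,\tht}$ absorbing $K_u$ and the constants to appear below then forces $\ce(Au,f)\le\eta_\tht L$ with $\eta_\tht$ as small as needed, tuned by the choice of $\gamma_\tht$.

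It therefore suffices to prove a $u$-independent criterion: there exists $\eta_\tht>0$ such that whenever $\ce(u,f)\le\eta_\tht L$ for an admissible flow $u$ and ignition reaction $f$ with threshold $\tht$, every solution of \eqref{1.1} with $T_0$ supported in $[-L,L]\times\bbR$ is quenched. Here the symmetry enters essentially: the reflection $R(x_1,x_2)=(-x_1,x_2)$ preserves the flow (one checks $u_1\circ R=-u_1$ and $u_2\circ R=u_2$, so $u(Rx)=Ru(x)$), and hence the pulsating front $U(s,x)$ in direction $+e_1$ with speed $c=\ce(u,f)$ induces the pulsating front $(t,x)\mapsto U(-x_1-ct,Rx)$ in direction $-e_1$ with the same speed. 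I would then construct a supersolution $W$ of \eqref{1.1} from $U$ and its reflected counterpart with $W(0,\cdot)\ge T_0$ and $W(t_0,\cdot)<\tht$ at some $t_0$ of order $L/c$. Once $T(t_0,\cdot)<\tht$, the ignition nonlinearity shuts off and $T$ satisfies the linear equation \eqref{1.3} thereafter, so a standard maximum-principle / heat-kernel argument on the torus delivers $T\to 0$.

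The main obstacle is the supersolution construction. A naive sum of the two symmetric fronts is not automatically a supersolution because combustion $f$ is not subadditive, and pulsating fronts themselves do not spontaneously decay in time. Making the construction go through requires using both the smallness of $c/L$ from the first step and the ignition property of $f$: below $\tht$ the nonlinearity vanishes, which lets the ``interior'' of $W$ shrink while the symmetric front pair blocks invasion from outside. The symmetry across the $x_2$-axis is used precisely to glue the two fronts along $\{x_1=0\}$ with no slope mismatch and no leftover advective flux, which is why this assumption cannot be dropped from the statement.
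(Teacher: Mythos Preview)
Your quenching criterion has the inequality reversed, and this error propagates through the entire argument. You claim that $\ce(u,f)\le\eta_\tht L$ implies quenching of data supported in $[-L,L]\times\bbR$, but take $u\equiv 0$: then $\ce(0,f)$ is a fixed positive number depending only on $f$, so for all sufficiently large $L$ one has $\ce(0,f)\le\eta_\tht L$ trivially, yet initial data $\chi_{[-L,L]\times\bbR}$ are certainly \emph{not} quenched---they propagate. The heuristic quoted from the introduction says quenching occurs when $\ce(Au,f)\gtrsim L$, not $\lesssim L$: strong mixing means \emph{large} effective diffusivity and hence, by Theorem~\ref{T.1.1}, \emph{large} front speed. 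Your first paragraph inherits the same reversal: from $\diffe(Au)\le K_u A^{1/2}$ and $A\ge C_{u,\tht}L^4$ you get $A^{1/4}\ge C_{u,\tht}^{1/4}L$, so the bound $\ce(Au,f)\lesssim A^{1/4}$ places $\ce$ \emph{above} a multiple of $L$, not below; increasing $A$ only makes this worse. The supersolution sketch cannot repair this: two pulsating fronts glued along $x_1=0$ have their burned regions expanding outward with speed $c$, not collapsing below $\tht$.

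The paper does not route the proof through Theorem~\ref{T.1.1} at all. It uses the \emph{lower} bound $\diffe(Au)\gtrsim A^{1/2}$ from \cite{Kor} to reduce to: data supported in $[-b_\tht\sqrt{\diffe(u)},\,b_\tht\sqrt{\diffe(u)}]\times\bbT$ are quenched for some small $b_\tht$. This is established by applying Theorem~\ref{T.2.1} directly to the \emph{linear} equation \eqref{1.3}, producing a curve joining $\{x_1=0\}$ to $\{x_1=1\}$ along which the linear solution $\psi$ stays below $\tht/4$ for $t\ge 1$. Symmetry across $x_1=0$ and periodicity then extend this curve across all of $\bbR\times\bbT$, after which a relaxation lemma from \cite{ZlaPersym} forces $\|\psi(\tau_\tht,\cdot)\|_\infty\le\tht/2$ at a $u$-independent time $\tau_\tht$. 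The comparison \eqref{2.4} with $\gamma_\tht=\tau_\tht^{-1}\ln 2$ then gives $\|T(\tau_\tht,\cdot)\|_\infty\le\tht$, and the nonlinearity switches off. Pulsating fronts play no role.
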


{\it Remarks.} 1. We note that the bound $\gamma_\tht$ on $f$ is in fact necessary. It is easy to show that if $f$ is large enough, then even a single cell with Dirichlet boundary conditions can support the reaction for any $A$ \cite{FKR}.
\smallskip

2. As in \cite{FKR}, scaling shows that $\|f(s)/s\|_\infty\le \gamma_\tht$ can be replaced by the requirement that the period of $u$ be smaller than $\|f(s)/s\|_\infty^{1/2} \gamma_\tht^{-1/2}$.
\smallskip

The rest of the paper is organized as follows. In Section \ref{S2} we introduce our main technical tool, Theorem \ref{T.2.1}, relating short and long time diffusivity of \eqref{1.3} in any dimension. In Section \ref{S3} we prove Theorem \ref{T.3.1}, a relation between the speed and the width of a front in two dimensions, and then Theorem \ref{T.1.1}. In Section \ref{S4} we prove Theorem \ref{T.1.4} and in Section \ref{S5} we provide a counterexample to Theorem \ref{T.1.1} in three and more dimensions.

It is a pleasure to thank James Nolen for useful discussions. The author has been supported in part by the NSF grant DMS-0901363 
and by an Alfred P.~Sloan Research Fellowship.

\section{Short Time Diffusivity of Periodic Flows} \lb{S2}

In this section we show that there is a close relation between short- and long-time diffusivity of the parabolic operator in  \eqref{1.3}. The results contained here are valid in any dimension.

Consider the stochastic process $X_t^{x}$ starting at $x\in \bbR^d$ and satisfying the stochastic differential equation
\beq \lb{2.1}
dX^{x}_t=\sqrt{2}\,dB_t - u(X^{x}_t)dt, \qquad X^{x}_0=x,
\eeq
where $B_t$ is a normalized Brownian motion on $\bbR^d$ with $B_0=0$ (defined on a probability space $(\Omega,\calB_\infty,\bbP_\Omega)$). By Lemma 7.8 in \cite{Oks}, we have that if $\psi$ solves \eqref{1.3} with $\psi(0,x)=\psi_0(x)$,
then
\beq \lb{2.3}
\psi(t,x) = \int_{\bbR^d} \psi_0(y)k_t(x,y) \,dy = \bbE_{\Omega} \big( \psi_0(X^{x}_t) \big),
\eeq
with $k_t(x,y)$ the fundamental solution for \eqref{1.3} and $\bbE_\Omega$ expectation with respect to $\omega\in\Omega$. That is, $k_t(x,.)$ is the density for $X_t^x$. 

We also have that if $T$ solves \eqref{1.1} with $T(0,x)=T_0(x)$ and $\psi_0(x)=T_0(x)\ge 0$, then by the comparison principle \cite{Sm}, for all $t,x$,
\beq \lb{2.4}
0\le \psi(t,x)\le  T(t,x)\le e^{t\|f(s)/s\|_\infty} \psi(t,x).
\eeq
We will use this relation to obtain short time upper estimates on the solution of \eqref{1.1}.

We start the study of short time diffusivity for \eqref{1.3} with noting that uniformly in $x\in\bbR^d$, 
\beq \lb{2.5}
\lim_{t\to\infty} \bbE_\Om \left(\frac {\left|(X_t^x-x)\cdot e\right|^2}{2t} \right) = \diffe(u).
\eeq
This  is based on the fact that
\[
 \bbE_\Om \left(\left|(X_t^x-x)\cdot e\right|^2 \right) = \int_{\bbR^d} |(y-x)\cdot e|^2 k_t(x,y)\,dy =  \int_{\bbR^d} |(\sqrt t\,y-x)\cdot e|^2 t^{d/2} k_t(x,\sqrt t\,y)\,dy
\]
and the following estimates on $k_t(x,y)$:
\begin{gather*}
t^{d/2} k_t(x,\sqrt{t}\,y)  \to k_1^{*}(0,y)  \quad \text{in $L^2(\bbR^d)$ as $t\to\infty$, uniformly in $x$;}\\
0\le k_t(x,y)  \le Ct^{-d/2} e^{-|x-y|^2/Ct} \quad \text{for some $u$-dependent $C>0$}.
\end{gather*}
The first is the standard homogenization limit (see, e.g., \cite{JKO, KM}) with 
\[
k_t^{*}(x,y)\equiv \frac 1{\sqrt{{\rm det}(\diff(u))}\, (4\pi t)^{d/2}} e^{-(y-x)\cdot \diff(u)^{-1}(y-x)/4t}
\]
 the fundamental solution of \eqref{1.4}, the second is the Nash-Aronson estimate for mean-zero flows (see, e.g., \cite{Norris}). The limit \eqref{2.5} now follows from
 \[
 \int_{\bbR^d} |y\cdot e|^2 k_1^{*}(0,y)\,dy = \int_{\bbR^d} \frac{|\sqrt{\diff(u)}\,z\cdot e|^2}{(4\pi)^{d/2}} e^{-z^2/4} dz = 
\int_{\bbR^d} \frac{|z\cdot \sqrt{\diff(u)}\,e|^2}{(4\pi)^{d/2}} e^{-z^2/4} dz  = 2\diffe(u).
 \]
 
The main result of this section is a lower bound on short time diffusivity for \eqref{1.3}:

\begin{theorem} \lb{T.2.1} 
There is $C>0$ such that  for any $\tau\ge 1$, any 1-periodic incompresible mean-zero Lipschitz flow $u$ and any  $\alpha>0$ there are $x\in\bbR^d$ and $t\in[0,\tau]$ such that 
\beq \lb{2.6}
\bbP_\Om \left(\left|(X_t^x-x)\cdot e\right| \ge \alpha \sqrt{\tau\diffe(u)} \right) \ge 1-C\alpha.
\eeq
%
%
\end{theorem}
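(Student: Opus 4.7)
The plan is to decompose $Z_t^x\equiv(X_t^x-x)\cdot e$ into a martingale plus a bounded correction via the corrector $\chi_e$ that solves the cell problem \eqref{1.5}. Applying It\^o's formula to $Z_t^x-\chi_e(X_t^x)+\chi_e(x)$, together with $-\Delta\chi_e+u\cdot\nabla\chi_e=u\cdot e$, one finds that
\[
M_t^x \;\equiv\; Z_t^x-\chi_e(X_t^x)+\chi_e(x) \;=\; \sqrt{2}\int_0^t\bigl(e-\nabla\chi_e(X_s^x)\bigr)\cdot dB_s
\]
is a continuous real-valued martingale with quadratic variation $\langle M^x\rangle_t=2\int_0^t|e-\nabla\chi_e(X_s^x)|^2\,ds$. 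Since $u$ is incompressible, Lebesgue measure on $\bbT^d$ is invariant for the projected process, and together with \eqref{1.6} this yields
\[
\int_{\bbT^d}\bbE_\Om\langle M^x\rangle_t\,dx \;=\; 2t\int_{\bbT^d}|e-\nabla\chi_e|^2\,dy \;=\; 2t\,\diffe(u),
\]
so there exists $x^*\in\bbT^d$ with $\bbE_\Om\langle M^{x^*}\rangle_\tau\ge 2\tau\diffe(u)$, in fact on a positive-measure set of starting points.

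I would then invoke the Dambis--Dubins--Schwarz representation $M_t^{x^*}=\tilde B(\langle M^{x^*}\rangle_t)$ for a standard Brownian motion $\tilde B$, and select a deterministic $t^*\in[0,\tau]$ so that with probability at least $1-O(\alpha)$ one has $\langle M^{x^*}\rangle_{t^*}\gtrsim\alpha^2\tau\diffe(u)$. Concretely, $t^*$ can be taken to be the first deterministic time at which the function $t\mapsto\bbE_\Om\langle M^{x^*}\rangle_t$ crosses a threshold of this order, combined with a reverse Markov argument using $\langle M^{x^*}\rangle_t\ge 0$ (and if needed averaging $x^*$ over a positive-measure neighborhood on $\bbT^d$ to control the random upper bound on $\langle M\rangle$). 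On the event that $\langle M^{x^*}\rangle_{t^*}\gtrsim\alpha^2\tau\diffe(u)$, the standard Brownian tail bound $\bbP(|\tilde B(v)|\ge\alpha\sqrt{\tau\diffe(u)})\ge 1-C\alpha$ (valid for small $\alpha$ whenever $v$ is at least of that order) supplies the required lower estimate on $\bbP(|M_{t^*}^{x^*}|\ge\alpha\sqrt{\tau\diffe(u)})$. To pass from $M$ to $Z$ one uses $|Z_{t^*}^{x^*}-M_{t^*}^{x^*}|=|\chi_e(X_{t^*}^{x^*})-\chi_e(x^*)|\le 2\|\chi_e\|_\infty$: when this is small compared to $\alpha\sqrt{\tau\diffe(u)}$ the bound passes directly, and in the opposite regime one instead chooses $x^*$ near a minimum of $\chi_e$ and argues that $X_{t^*}^{x^*}$ is sufficiently likely to be near a maximum, so that the correction has the favorable sign.

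The main obstacle is controlling the lower tail of the random quadratic variation $\langle M^{x^*}\rangle_{t^*}$: absent a spectral-gap estimate uniform in $u$, the additive functional $\int_0^{t^*}|e-\nabla\chi_e(X_s^{x^*})|^2\,ds$ could in principle put significant mass near zero, which would degrade the DDS Gaussian tail from $1-C\alpha$ to merely a positive constant. The flexibility of choosing both $x^*\in\bbT^d$ over a positive-measure set and $t^*\in[0,\tau]$ is what is expected to save the argument: one picks $t^*$ small enough that pathological sample paths have not had time to produce an atypically small $\langle M\rangle_{t^*}$, while the identity above forces its expectation to already be of size $\sim t^*\diffe(u)$, so that an appropriate choice of threshold yields the sharp $1-C\alpha$ bound rather than merely positive probability.
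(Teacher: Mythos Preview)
Your approach via the corrector decomposition and Dambis--Dubins--Schwarz is natural, but it has a genuine gap precisely where you flag it: controlling the lower tail of the random quadratic variation $\langle M^{x^*}\rangle_{t^*}$ \emph{uniformly in $u$}. Knowing only that $\int_{\bbT^d}\bbE_\Om\langle M^x\rangle_t\,dx=2t\,\diffe(u)$ lets you choose $x^*$ with $\bbE_\Om\langle M^{x^*}\rangle_{t^*}\ge 2t^*\diffe(u)$, but a Paley--Zygmund / reverse-Markov step then requires an upper bound on $\bbE_\Om(\langle M^{x^*}\rangle_{t^*}^2)$, and that second moment involves $\|\nabla\chi_e\|_\infty$, which is not bounded uniformly in $u$ (for the cellular flows $Au_{\rm cell}$ with $A\to\infty$, $|\nabla\chi_e|$ concentrates on boundary layers of width $\sim A^{-1/2}$ and blows up there). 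The freedom in choosing $t^*$ does not help: there is no $u$-independent mixing time guaranteeing that the additive functional $\int_0^{t^*}|e-\nabla\chi_e(X_s^x)|^2\,ds$ has equilibrated, so for many starting points it can be $o(t^*\diffe(u))$ with probability bounded away from zero. The passage from $M$ back to $Z$ has the same defect, since $\|\chi_e\|_\infty$ is also $u$-dependent, and the ``favorable sign'' argument you sketch is not a proof.

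The paper avoids the corrector entirely. It first establishes a $u$-uniform second-moment lower bound $\bbE_{\tilOm}\big(|(X_\tau^x-x)\cdot e|^2\big)\ge\til C\tau\diffe(u)$ (Lemma~\ref{L.2.2}) by writing $(X_{N\tau}^x-x)\cdot e$ as a sum of $\tau$-increments and showing their correlations decay like $2^{-\gamma|m-n|}$ with a rate $\gamma>0$ independent of $u$; the key input is a $u$-uniform relaxation bound $\|h_M-1\|_\infty\le\tfrac12$ for the periodized heat kernel (Lemma~5.6 of \cite{CKRZ}). From this second-moment bound the theorem follows by a contradiction/pigeonhole argument: if \eqref{2.6} failed for every $x$ and every $t\in[0,\tau]$, then by the strong Markov property applied at the successive hitting times of the levels $|(X_t^x-x)\cdot e|=2j\alpha\sqrt{\tau\diffe(u)}$, the process would exit any strip of width $\sim\sqrt{\tau\diffe(u)}$ with probability at most $\tfrac12$, forcing geometric tails that contradict Lemma~\ref{L.2.2}. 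This argument uses only second-moment information and the Markov property, never pointwise control of $\chi_e$ or $\nabla\chi_e$, which is why the constant $C$ comes out independent of $u$.
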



\begin{proof}
We first note that if $\tilOm\equiv \bbT^d\times\Om$ is equipped with the product probability measure, then \eqref{2.5} gives
\beq \lb{2.7}
\lim_{t\to\infty} \bbE_\tilOm \left(\frac {\left|(X_t^x-x)\cdot e\right|^2}{2t} \right) = \diffe(u),
\eeq
where the expectation is with respect to $(x,\omega)\in\tilOm$. Then we have


\begin{lemma} \lb{L.2.2}
There is $\til C>0$ such that for any $\tau\ge 1$ and $u$ as in Theorem \ref{T.2.1},
\beq \lb{2.8}
\bbE_\tilOm \left(\left| (X_\tau^x -x)\cdot e \right|^2 \right) \ge \til C\tau \diffe(u),
\eeq
\end{lemma}

\begin{proof}
Let us first assume $\tau=1$. For $x\in\bbT^d$, let $\til X_{t}^x\equiv X_{t}^x \, {\rm mod}\, 1\in\bbT^d$ be the process corresponding to \eqref{1.3} on $\bbT^d$. We note that $\til X_t^x$ is uniformly distributed over $\bbT^d$ as a random variable on $\tilOm$. Indeed, if $B\subseteq\bbT^d$ and $\psi_0(x)=\chi_B(x)$, then for each $t\ge 0$, 
\[
\bbP_\tilOm (\til X_t^x\in B) = \int_{\bbT^d} \psi(t,x) \,dx = \int_{\bbT^d} \psi(0,x) \,dx = |B|
\]
because the evolution \eqref{1.3} preserves the total mass of $\psi$.

We next let $Y_t^x\equiv X_t^x-X_{t-1}^x$ and $Z_t^x\equiv Y_t^x\cdot e$. Then periodicity of $u$ implies
\[
Y_t^x = X_1^{X_{t-1}^x}-X_{t-1}^x = X_1^{\til X_{t-1}^x}-\til X_{t-1}^x
\]
in law. Since the $\til X_{t-1}^x$ for all $t\ge 1$ are identically distributed as random variables on $\til\Omega$, the same is true for the increment displacements $Y_t^x$ as well as the $Z_t^x$.  In particular, for each $t\ge 1$, 
\beq \lb{2.9}
\bbE_\tilOm(|Z_t^x|^2) = \bbE_\tilOm(|Z_1^x|^2).
\eeq

We also have $(X_N^x-x)\cdot e=\sum_{n=1}^N Z_n^x$ for any $N\in\bbN$  and so by \eqref{2.7},
\beq \lb{2.10}
\sum_{n,m=1}^N \bbE_\tilOm \left( Z_n^x Z_m^x \right) = 2N\diffe(u) + o(N).
\eeq
Since $\left| \bbE_\tilOm \left( Z_n^x Z_m^x \right) \right| \le \bbE_\tilOm(|Z_1^x|^2)$ is obvious from \eqref{2.9} and the Schwarz inequality, we will obtain \eqref{2.8} for $\tau=1$ if we can show the existence of $u$-independent $M\in\bbN$ and $\gamma>0$ such that
\beq \lb{2.11}
\left| \bbE_\tilOm \left( Z_n^x Z_m^x \right) \right| \le 2^{-\gamma|m-n|} \bbE_\tilOm(|Z_1^x|^2).
\eeq
whenever $|m-n|\ge M+1$.

We denote $h_t(x,y)\equiv \sum_{j\in\bbZ^d} k_t(x,y+j)$ the fundamental solution for \eqref{1.3} on $\bbT^d$. We then have 
\beq \lb{2.12}
\int_{\bbT^d} h_t(x,y)\,dy = \int_{\bbT^d} h_t(x,y)\,dx = 1.
\eeq
By Lemma 5.6 in \cite{CKRZ}, there is $M\in\bbN$ such that for all $u$ as above, $\|h_M(.,.)-1\|_\infty \le \tfrac 12$. The maximum principle gives $\|h_t(.,.)-1\|_\infty \le \|h_s(.,.)-1\|_\infty$ for $t\ge s$ and this together with $(h_t-1)*(h_s-1)=h_{t+s}-1$ (from \eqref{2.12}) implies for all $m\ge M$ and $\gamma\equiv (2M)^{-1}$,
\beq \lb{2.13}
\|h_m(.,.)-1\|_\infty \le 2^{-\gamma (m+1)}.
\eeq
As a final prerequisite, we note that 
\beq \lb{2.14}
\int_{(\bbT^d)^2} \sum_{j\in\bbZ^d} (y+j-x) k_t(x,y+j)\,dxdy=0.
\eeq
This can be obtained by taking the initial datum $\psi_0(x)=\chi_{\bbT^d}(x)$ in \eqref{1.3} on $\bbR^d$ and evaluating
\[
\frac d{dt} \int_{\bbR^d} x\psi\, dx = \int_{\bbR^d} x\Delta\psi - x \nabla\cdot(u\psi)\, dx = \int_{\bbR^d} u\psi\, dx = \int_{\bbT^d} u(x) \sum_{j\in\bbZ^d} \psi(t,x+j)\, dx = 0,
\]
where we used integration by parts, the fact that $\sum_{j\in\bbZ^d} \psi(t,x+j)\equiv 1$ and $u$ being mean-zero. Thus for each $t\ge 0$ (recall that $\bbT^d=[-\tfrac 12,\tfrac 12]^d$),
\[
0 = \int_{\bbR^d} x\psi(t,x)\, dx =  \int_{\bbR^d\times\bbT^d} x k_t(x,y)\,dxdy = \int_{\bbR^d\times\bbT^d} (x-y) k_t(x,y)\,dxdy
\]
because $\int_{\bbR^d} yk_t(x,y)\,dx = y$. Then by periodicity,
\[
0= \int_{(\bbT^d)^2} \sum_{j\in\bbZ^d} (x+j-y) k_t(x+j,y)\,dxdy = -\int_{(\bbT^d)^2} \sum_{j\in\bbZ^d} (y-j-x) k_t(x,y-j)\,dxdy.
\]

In what follows we denote $x_e\equiv x\cdot e$ for $x\in\bbR^d$. For $m-n\ge M+1$ we have by $k_t*k_s=k_{t+s}$,
\begin{align*}
\left| \bbE_\tilOm \left( Z_n^x Z_m^x \right) \right| =  \bigg| \int_{(\bbR^d)^5}  & (x_n-x_{n-1})_e(x_m-x_{m-1})_e k_{n-1}(x,x_{n-1})k_1(x_{n-1},x_n) \\
& k_{m-1-n}(x_n,x_{m-1})k_1(x_{m-1},x_m)\,dxdx_{n-1}dx_ndx_{m-1}dx_m \bigg| \\
=  \bigg| \int_{(\bbT^d)^5} & \sum_{j,l\in\bbZ^d}  (x_n+j-x_{n-1})_e(x_m+l-x_{m-1})_e h_{n-1}(x,x_{n-1}) k_1(x_{n-1},x_n+j) \\
& h_{m-1-n}(x_n,x_{m-1})k_1(x_{m-1},x_m+l)\,dxdx_{n-1}dx_ndx_{m-1}dx_m \bigg|.
\end{align*}
Here we have used the fact that 
\begin{align*}
\sum_{p,q\in\bbZ^d}  & (x_m+p-x_{m-1}-q)_e  k_{m-1-n}(x_n,x_{m-1}+q)k_1(x_{m-1}+q,x_m+p) \\
= & \sum_{l\in\bbZ^d} (x_m+l-x_{m-1})_e \sum_{q\in\bbZ^d} k_{m-1-n}(x_n,x_{m-1}+q) k_1(x_{m-1}+q,x_m+l+q) \\
= & \sum_{l\in\bbZ^d} (x_m+l-x_{m-1})_e k_1(x_{m-1},x_m+l) \sum_{q\in\bbZ^d} k_{m-1-n}(x_n,x_{m-1}+q) \\
= & \sum_{l\in\bbZ^d} (x_m+l-x_{m-1})_e h_{m-1-n}(x_n,x_{m-1}) k_1(x_{m-1},x_m+l)
\end{align*}
(due to periodicity of $u$) and similarly 
\begin{align*}
\sum_{p,q\in\bbZ^d}  & (x_n+p-x_{n-1}-q)_e  k_{n-1}(x,x_{n-1}+q)k_1(x_{n-1}+q,x_n+p) \\
= & \sum_{j\in\bbZ^d} (x_n+j-x_{n-1})_e h_{n-1}(x,x_{n-1}) k_1(x_{n-1},x_n+j).
\end{align*}
The integral with respect to $x$ can now be eliminated along with $h_{n-1}(x,x_{n-1})$ because  $\int_{\bbT^d} h_{n-1}(x,x_{n-1})\,dx=1$. Notice that \eqref{2.14} gives
\[
\int_{(\bbT^d)^2} \sum_{l\in\bbZ^d} (x_m+l-x_{m-1})_e k_1(x_{m-1},x_m+l)\,dx_{m-1}dx_m = 0,
\]
and so \eqref{2.13} and Schwarz inequality imply
\begin{align*}
2^{\gamma(m-n)} & \left| \bbE_\tilOm  \left( Z_n^x Z_m^x \right) \right| 
=  2^{\gamma(m-n)} \bigg| \int_{(\bbT^d)^4} \sum_{j,l\in\bbZ^d} (x_n+j-x_{n-1})_e(x_m+l-x_{m-1})_e k_1(x_{n-1},x_n+j) \\
& \phantom{mmmmmmmmmmmmm} [h_{m-1-n}(x_n,x_{m-1})-1]k_1(x_{m-1},x_m+l)\,dx_{n-1}dx_ndx_{m-1}dx_m \bigg| \\
\le & \bigg| \int_{(\bbT^d)^4} \sum_{j,l\in\bbZ^d} |(x_n+j-x_{n-1})_e|^2 k_1(x_{n-1},x_n+j) 
k_1(x_{m-1},x_m+l)\,dx_{n-1}dx_ndx_{m-1}dx_m \bigg|^{1/2}  \\
& \bigg| \int_{(\bbT^d)^4} \sum_{j,l\in\bbZ^d} |(x_m+l-x_{m-1})_e|^2 k_1(x_{n-1},x_n+j) 
k_1(x_{m-1},x_m+l)\,dx_{n-1}dx_ndx_{m-1}dx_m \bigg|^{1/2} \\
= & \bigg| \int_{(\bbT^d)^2} \sum_{j\in\bbZ^d} |(x_n+j-x_{n-1})_e|^2 k_1(x_{n-1},x_n+j) \,dx_{n-1}dx_n \bigg|^{1/2} \\
 & \bigg| \int_{(\bbT^d)^2} \sum_{l\in\bbZ^d} |(x_m+l-x_{m-1})_e|^2 k_1(x_{m-1},x_m+l) \,dx_{m-1}dx_m \bigg|^{1/2} \\
 = & \bbE_\tilOm  \left( |Z_n^x|^2\right)^{1/2} \bbE_\tilOm  \left( |Z_m^x|^2\right)^{1/2}. 
\end{align*}
Now \eqref{2.9} yields \eqref{2.11}, finishing the proof for $\tau=1$. The general case is identical, this time with $Y_n^x$ being $X_{n\tau}^x-X_{(n-1)\tau}^x$, the same $\gamma$, $M$ and $\til C$, and $2N$ replaced by $2N\tau$ in \eqref{2.10} (one actually gets $\til C\to 2$ as $\tau\to\infty$).
\end{proof}


We will now prove \eqref{2.6} with $C\equiv 10\til C^{-1/2}$. Assume, towards contradiction, that for some $u$ there are $\tau\ge 1$ and $\alpha>0$ such that for any $x\in\bbR^d$ and any $t\in[0,\tau]$,
\beq \lb{2.15}
\bbP_\Om \left(\left|(X_t^x-x)\cdot e\right| < \alpha \sqrt{\tau\diffe(u)} \right) > C\alpha.
\eeq

We first claim that for each $x\in\bbR^d$,
\beq \lb{2.16}
\bbP_\Om \left( \forall t\in[0,\tau] \,\bigg|\, \left|(X_t^x-x)\cdot e\right| < \frac{4\sqrt{\tau\diffe(u)}} C  \right) \ge \frac 12.
\eeq
Indeed, if this is not true, let $x$ be such that the probability in \eqref{2.16} is less than $\tfrac 12$. This means that there is a subset $\Omega'\subseteq\Omega$ of measure more than $\tfrac 12$ such that if $t_j(\omega)\ge 0$ is the first time the (almost surely continuous in $t$) path $X_t^x=X_t^x(\omega)$  hits the set 
\[
H_{j}\equiv \big\{ y\in\bbR^d \,\big|\, |(y-x)\cdot e|=2j\alpha\sqrt{\tau\diffe(u)} \big\},
\]
then $t_j(\omega)\le\tau$ for each $\omega\in\Omega'$ and $j=0,\dots,\lfloor \tfrac 2{C\alpha} \rfloor$.
Now the strong Markov property of the process $X_t^x$, the fact that $\tau-t_j\in[0,\tau]$, and \eqref{2.15} imply that for $j=0,\dots,\lfloor \tfrac 2{C\alpha} \rfloor$ the following conditional probability satisfies
\[
\bbP_\Om \left(\left|(X_\tau^x-x)\cdot e\right| \in \left( (2j-1) \alpha \sqrt{\tau\diffe(u)},(2j+1) \alpha \sqrt{\tau\diffe(u)} \right) \,\Big|\, \calB_{t_j} \right) > C\alpha \chi_{\Om'}(\omega),
\]
with $\calB_{t_j}$ the $\sigma$-algebra corresponding to the stopping time $t_j$.
Thus
\[
\bbP_\Om \left(\left|(X_\tau^x-x)\cdot e\right| \in \left( (2j-1) \alpha \sqrt{\tau\diffe(u)},(2j+1) \alpha \sqrt{\tau\diffe(u)} \right) \right) > \frac{C\alpha}2
\]
for $j=0,\dots,\lfloor \tfrac 2{C\alpha} \rfloor$.
Since $\frac{C\alpha}2 (\lfloor \tfrac 2{C\alpha} \rfloor+1)>1$, this is a contradiction, thus proving \eqref{2.16}.

Now \eqref{2.16} and the almost sure continuity of $X_t^x$ in $t$ show for each $x\in\bbR^d$ and each $j\ge 1$,
\[
\bbP_\Om \left( \left|(X_\tau^x-x)\cdot e\right| \ge \frac{4j\sqrt{\tau\diffe(u)}} C  \right) \le \left( \frac 12\right)^j.
\]
That, however, means
\[
\bbE_\tilOm \left(\left| (X_\tau^x -x)\cdot e \right|^2 \right) \le \frac{16\tau\diffe(u)}{C^2} \sum_{j\ge 1} j^2 \left[ \left(\frac 12\right)^{j-1} - \left(\frac 12\right)^{j} \right] = \frac{96\tau \diffe(u)}{C^2} <  \til C\tau \diffe(u),
\]
contradicting \eqref{2.8}. This finishes the proof of Theorem \ref{T.2.1}.
\end{proof}

\section{Pulsating Front Speed in 2D} \lb{S3}

We will now prove Theorem \ref{T.1.1}. The upper bound in \eqref{1.7} is immediate from the results in \cite{RZ}. Indeed, the same bound has been proved there for KPP reactions with $C_2(f)=C\sqrt{f'(0)}(1+\sqrt{f'(0)})$. One thus only needs to apply this result to some KPP $\til f$ such that $f\le\til f$ and $\|f(s)/s\|_\infty=\til f'(0)$, and use the fact that $\ce(u,f)\le \ce(u,\til f)$ due to $f\le\til f$. 

We will now prove the lower bound in \eqref{1.7} using Theorem \ref{T.2.1} coupled with the following bound on the width of the front in terms of its speed. We note that we can assume $T_t>0$. This has been proved in \cite{BH} for all ignition reactions (and also for positive reactions with $f'(0)>0$), and the lower bound in \eqref{1.7} for any ignition reaction $\til f \le f$ proves the lower bound for $f$ because again $\ce(u,\til f)\le \ce(u,f)$.

\begin{theorem} \lb{T.3.1}
There is $C_0>0$ such that for any 1-periodic incompressible mean-zero $C^{1,\del}$ flow $u$ on $\bbR^2$, any unit vector $e\in\bbR^2$, and any combustion-type reaction $f$ the following holds. If $f\ge m\chi_{[\zeta,\xi]}$ for some $m>0$ and $0<\zeta<\xi<1$, $\eps\in(0,(\xi-\zeta)/2)$, and $T(t,x)$ is a pulsating front for \eqref{1.1} with speed $c>0$ and $T_t>0$, then  there is $z\in\bbR$ such that any connected set $B$ with
\[
B\subseteq \{x\in\bbR^2 \,|\, x\cdot e\ge z+ct+cC_0(m^{-1}+\eps^{-2})+2 \text{ and } T(t,x)\ge \zeta+\eps  \} \equiv B_t^+
\]
or
\[
B\subseteq \{x\in\bbR^2 \,|\, x\cdot e\le z+ct-cC_0(m^{-1}+\eps^{-2})-2 \text{ and } T(t,x)\le \xi-\eps  \} \equiv B_t^-
\] 
satisfies $\diam(B)\le \tfrac 1{10}$.
\end{theorem}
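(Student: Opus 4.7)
My plan is to argue by contradiction, handling the case $B \subseteq B_t^+$ in detail and treating $B \subseteq B_t^-$ symmetrically (cold pockets behind the front, ``filled in'' by diffusion from the hot surroundings and then reaction). First, I would fix $z \in \bbR$ using the pulsating structure $T(t,x) = U(x\cdot e - ct, x)$, the uniform limits in \eqref{1.2}, and the periodicity of $U(s,\cdot)$ on $\bbT^2$, so that
\[
x\cdot e \ge z + ct + 1 \implies T(t,x) < \zeta + \eps/2 \qquad\text{and}\qquad x\cdot e \le z + ct - 1 \implies T(t,x) > \xi - \eps/2.
\]
The existence of such a $z$ uses only the definition of the pulsating front together with $0 < \zeta < \xi < 1$ and is independent of $u$.

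The heart of the argument is a reaction-enhancement lemma: if $T(t_0, \cdot) \ge \zeta + \eps$ on a connected $B \subseteq \bbR^2$ with $\diam(B) > \tfrac{1}{10}$, then there exist $\tau = C_0(m^{-1} + \eps^{-2})$ and $x_*$ within distance $1$ of $B$ such that $T(t_0 + \tau, x_*) \ge \xi - \eps$. I would prove this by building a compactly supported sub-solution $\underline T \le T$: first, a parabolic Harnack / heat-kernel step uses the 2D geometry of $B$ to propagate the initial bound to a thickened bump---specifically, to $\underline T \ge \zeta + c_0$ on an $O(1)$-neighborhood of $B$ after time $O(\eps^{-2})$; second, once $\underline T$ sits comfortably above $\zeta$ on a bulk region, the reaction lower bound $f \ge m\chi_{[\zeta,\xi]}$ drives $\underline T$ above $\xi - \eps$ in additional time $O(m^{-1})$. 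Given this lemma, the contradiction for $B \subseteq B_t^+$ is immediate: any $x_*$ within distance $1$ of $B$ satisfies
\[
x_* \cdot e \ge \big(z + ct + cC_0(m^{-1} + \eps^{-2}) + 2\big) - 1 = z + c(t+\tau) + 1,
\]
so $T(t+\tau, x_*) < \zeta + \eps/2 < \xi - \eps$ by the choice of $z$, contradicting the conclusion of the lemma.

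The main obstacle is the first, 2D-specific step of the reaction-enhancement lemma. A connected set of diameter $\ge \tfrac{1}{10}$ in $\bbR^2$ has enough topological bulk that an incompressible mean-zero $u$ cannot strip the initial heat away from an $O(\eps)$-thickening of it on the diffusion time scale $O(\eps^{-2})$; in $\bbR^3$, by contrast, a thin filamentary connected set of the same diameter could be dispersed by the flow before reaction matures, consistently with the counterexamples in Section~\ref{S5}. Turning this topological intuition into a quantitative lower bound uniform in $u$---using only incompressibility and mean-zero, and simultaneously controlling the two time scales $m^{-1}$ (reaction) and $\eps^{-2}$ (diffusion across the initial $\eps$-sized shortfall)---is the key technical challenge.
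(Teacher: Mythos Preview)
Your Step 1 is circular. Asking for a single $z$ with \emph{both} implications at distance $\pm 1$ is equivalent to asserting that the front transition region (in the variable $s=x\cdot e-ct$) has width at most $2$, which is strictly stronger than the theorem you are trying to prove and is not a consequence of the uniform limits \eqref{1.2}. If instead you pick $z$ so that only the first implication holds (which \emph{is} possible from \eqref{1.2}), then $B_t^+$ is empty trivially and your reaction-enhancement lemma is never invoked; the entire content of the theorem then sits in the $B_t^-$ case, where your ``symmetric'' argument does not close: the equation is not time-reversible, and a cold pocket being filled in forward in time is not a contradiction. Conversely, if you pick $z$ for the second implication, $B_t^-$ trivializes but your lemma no longer yields a contradiction for $B_t^+$, since you have no smallness of $T$ ahead of $z+c(t+\tau)+1$ to violate. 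Either way the argument does not produce a single $z$ satisfying both halves of the conclusion.

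The reaction-enhancement lemma itself is also problematic uniformly in $u$. The connected set $B$ can have zero Lebesgue measure (a curve), so the sub-solution $(\zeta+\eps)\chi_B$ carries no mass and diffuses to zero instantly; any useful sub-solution would need a lower bound on $T$ in a \emph{neighborhood} of $B$, but the thickness of that neighborhood depends on $|\nabla T|$ and hence on $u$. The paper avoids this entirely by a global energy argument: integrating the front equation over $\Gamma=\bbR\times[0,1]^2$ gives the exact identities $\int_\Gamma f(T)\,dt\,dx=1$ and $\int_\Gamma|\nabla_x T|^2\,dt\,dx\le\tfrac12$. One then tracks the spatial average $\bar T(t)=\int_{[0,1]^2}T(t,x)\,dx$ (monotone since $T_t>0$) and uses the Poincar\'e inequality: whenever $\bar T\in[\zeta+\tfrac\eps2,\xi+\tfrac\eps2]$, either $\|\nabla_xT\|_{L^2}$ is large or $\int f(T)\,dx\ge m/2$, so the two integral bounds cap the total time in this regime by $O(m^{-1}+\eps^{-2})$. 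The 2D input is purely static: a connected set of diameter $>\tfrac1{10}$ projects onto some axis in an interval of length $>\tfrac1{16}$, which forces $\|\nabla_xT\|_{L^2}\gtrsim\eps$ whenever such a set sits at level $\ge\zeta+\eps$ while $\bar T\le\zeta+\tfrac\eps2$ (and symmetrically on the other side). No sub-solutions, Harnack estimates, or flow-uniform heat-kernel bounds are needed.
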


{\it Remark.} The above form of this result will be sufficient for our purposes. Its proof in fact shows that the set of $x$ such that $T(t,x)\ge \zeta+\eps$ resp. $T(t,x)\le \xi-\eps$ covers at most 1\% of any unit square lying in the halfplane $x\cdot e\ge z+ct+cC_0(m^{-1}+\eps^{-2})+2$ resp. $x\cdot e\le z+ct-cC_0(m^{-1}+\eps^{-2})-2$ (and this bound decreases as $C_0$ increases). That is, for any $t$, outside of a strip of width $2cC_0(m^{-1}+\eps^{-2})+4$, values of $T(t,\cdot)$ are mostly outside of $[\zeta+\eps,\xi-\eps]$. This yields a bound on the width of the front in terms of its speed.

\begin{proof}
Let $T(t,x)=U(x\cdot e-ct,x)$ with $U$ satisfying \eqref{1.2}, so that 
\beq \lb{3.1}
-cU_s+u\cdot \nabla_x U + u\cdot e U_s = \Delta_x U+U_{ss}+2e\cdot\nabla_xU_s+f(U).
\eeq
Integrating this over $\Gamma\equiv\bbR\times\Gamma_0\equiv\bbR\times[0,1]^2$ and using 1-periodicity of $U$ in $x$, \eqref{1.2}, and $u$ being incompressible and mean zero, we get 
\beq \lb{3.2}
\int_\Gamma f(U(s,x))dsdx = c.
\eeq
Similarly, multiplying \eqref{3.1} by $U$ and integrating over $\Gamma$  yields 
\[
\frac c2+\int_\Gamma |\nabla_x U +e U_s|^2 \,dsdx= \int_\Gamma f(U(s,x)) U(s,x)\,dsdx.
\]
The right hand side is bounded above by $c$ thanks to \eqref{3.2} and so
\begin{gather} 
\int_\Gamma f(T(t,x)) \,dtdx = 1, \lb{3.3}
\\ \int_\Gamma |\nabla_x T(t,x)|^2 \,dtdx \le \frac 12 \lb{3.4}.
\end{gather}

Let $\bar T(t)\equiv\int_{\Gamma_0} T(t,x)\,dx\in[0,1]$ so that $\bar T_t>0$ because $T_t>0$. Assume that $\bar T(t)\in[\zeta+\tfrac \eps 2, \xi+\tfrac \eps 2]$ for some $t$. Then the Poincar\' e inequality $\|T(t,\cdot)-\bar T(t)\|_{L^2(\Gamma_0)}\le C_1\|\nabla_xT(t,\cdot)\|_{L^2(\Gamma_0)}$ for some $C_1\ge 2$ gives either $\|T(t,\cdot)-\bar T(t)\|_{L^2(\Gamma_0)}\le\tfrac \eps 4$ or $\|\nabla_xT(t,\cdot)\|_{L^2(\Gamma_0)}\ge \tfrac \eps{4C_1}$. In the first case the set of $x\in\Gamma_0$ with $|T(t,x)-\bar T(t)|\ge \tfrac \eps 2$ has measure less than $\tfrac 12$ and so $\int_{\Gamma_0} f(T(t,x))dx\ge \tfrac m2$. Thus \eqref{3.3} and \eqref{3.4} show that the interval of all $t$ with $\bar T(t)\in[\zeta+\tfrac \eps 2, \xi+\tfrac \eps 2]$ has length at most $\tfrac 2m+\tfrac {8C_1^2}{\eps^2}$.

Assume now $\bar T(t)\le\zeta+\tfrac \eps 2$ and that there is a connected set $B\subseteq \{x\in\Gamma_0 \,|\,  T(t,x)\ge \zeta+\eps  \}$ with $\diam(B)\ge \tfrac 1{10}$ (such $t$ form an interval due to $T_t>0$). Projection of $B$ on one of the axes (let us say $x_1$) is then an interval $I$ of length more than $\tfrac 1{16}$. If for each $x_1\in I$ there is $x_2\in[0,1]$ such that $T(t,x_1,x_2)\le\zeta+\tfrac {3\eps}4$, then $\|\nabla_xT(t,\cdot)\|_{L^2(\Gamma_0)}\ge \tfrac \eps{16}$. If on the other hand $T(t,y,x_2)\ge\zeta+\tfrac {3\eps}4$ for some $y\in I$ and all $x_2\in[0,1]$, then there is $J\subseteq[0,1]$ of measure at least $\tfrac 12$ such that either $T(t,x_1,x_2)\ge \tfrac{5\eps}8$ for all $(x_1,x_2)\in [0,1]\times J$ or $T(t,x_1,x_2)\le \tfrac{5\eps}8$ for each $x_2\in J$ and some $x_2$-dependent $x_1\in[0,1]$. In the first case  $\|T(t,\cdot)-\bar T(t)\|_{L^2(\Gamma_0)}\ge \tfrac \eps 8$, and the Poincar\' e inequality gives $\|\nabla_xT(t,\cdot)\|_{L^2(\Gamma_0)}\ge \tfrac \eps{8C_1}$. In the second case $\|\nabla_xT(t,\cdot)\|_{L^2(\Gamma_0)}\ge \tfrac \eps{12}$. In either case we have $\|\nabla_xT(t,\cdot)\|_{L^2(\Gamma_0)}\ge \tfrac \eps{8C_1}$, so the interval of the $t$ above has length at most $\tfrac {32C_1^2}{\eps^2}$.

The same is true for $t$ such that $\bar T(t)\ge\xi-\tfrac \eps 2$ and there is a connected set $B\subseteq \{x\in\Gamma_0 \,|\,  T(t,x)\le \xi-\eps  \}$ with $\diam(B)\ge \tfrac 1{10}$. Thus there is an interval $[a,b]$ with $b-a\le \tfrac 2m+\tfrac {72C_1^2}{\eps^2}$ such that for $t\le a$ connected subsets of  $\{x\in\Gamma_0 \,|\,  T(t,x)\ge \zeta+\eps  \}$ have diameter at most $\tfrac 1{10}$ and the same is true for $t\ge b$ and connected subsets of  $\{x\in\Gamma_0 \,|\,  T(t,x)\le \xi-\eps  \}$. 

Finally, let $z\equiv -c\tfrac{a+b}2$ and $C_0\equiv 36C_1^2$, and assume that for some $t$ a set $B\subseteq B_t^+$ contains a point $\til x$. Then with $\lfloor x\rfloor$ the integer part of $x$,
\[
T(t,x)=U(x\cdot e-ct,x)=U(x\cdot e-ct,x-\lfloor\til x\rfloor)=T(t-\tfrac{\lfloor\til x\rfloor \cdot e}c, x-\lfloor\til x\rfloor).
\]
We have using $\til x\in B$,
\[
t-\frac{\lfloor\til x\rfloor \cdot e}c \le \frac {\sqrt 2}c- \frac zc -C_0(m^{-1}+\eps^{-2})-\frac 2c \le \frac{a+b}2 - 36 C_1^2 (m^{-1}+\eps^{-2}) \le a 
\]
and so $\diam(B)\le\tfrac 1{10}$  (in fact, to handle $B$ not lying entirely inside a square with integer corners, we need to consider $\Gamma_0=[-\tfrac 1{10}, 1+ \tfrac 1{10}]^2$, which only changes $C_0$ by a fixed factor). 
A similar argument takes care of sets $B\subseteq B_t^-$, thus finishing the proof. 

Notice that if $B$ is not required to be connected in the proof, then one still obtains the bound $\tfrac 1{16}$ on the size of its projection on the axes. This proves the remark.
\end{proof}

\begin{proof}[Proof of Theorem \ref{T.1.1}]
As mentioned above, we only need to prove the lower bound in \eqref{1.7} and only assuming $T_t>0$.
For $\zeta\in(0,1)$ let $\xi\equiv 1-\tfrac{(1-\zeta)^2}8$ and $\eps\equiv \tfrac{(1-\zeta)^2}8$, and  pick $M_\zeta,\alpha_\zeta>0$ so that with $C$ from Theorem \ref{T.2.1},
\beq \lb{3.5}
\frac{\zeta+\eps}{\xi-\eps} + \frac{1-(\xi-\eps)e^{-M_\zeta}}{1-(\zeta+\eps)} < 1-C\alpha_\zeta.
\eeq
This is possible because if  $\zeta'>\zeta+\eps$ and $1-\tfrac{(1-\zeta')^2}2<\xi-\eps$ (e.g., $\zeta'\equiv \tfrac{1+7\zeta}8$), then
\[
\frac{\zeta+\eps}{\xi-\eps} + \frac{1-(\xi-\eps)}{1-(\zeta+\eps)} < \frac{\zeta'}{1-\tfrac{(1-\zeta')^2}2}+ \frac{\tfrac{(1-\zeta')^2}2}{1-\zeta'} = 1-\frac{(1+\zeta'^2)(1-\zeta')}{2(1+2\zeta'-\zeta'^2)} < 1.
\]
Let us choose $\zeta$ so that $f$ is strictly positive on $[\zeta,\xi]$ and denote $m\equiv\min_{s\in[\zeta,\xi]} f(s)>0$. It is sufficient to assume that  
\beq \lb{3.6}
m\le M_\zeta \zeta \qquad\text{and}\qquad f(s)\le \frac m\zeta s \quad\text{for $s\in[0,1]$.}
\eeq
Indeed, otherwise consider $\til f(s) \equiv \min \{f(s),\tfrac m\zeta s,M_\zeta s\}$ (which satisfies \eqref{3.6} with $\til m\equiv \min_{s\in[\zeta,\xi]} \til f(s) = \min\{m,M_\zeta \zeta\}$ in place of $m$) instead of $f$ and then use $\ce(u,\til f)\le \ce(u, f)$. Note also that \eqref{1.7b} for all $m_\zeta(f)\in (0,M_\zeta\zeta)$ proves \eqref{1.7b} for all $m_\zeta(f)>0$ (with a different $C_\zeta$). So let us assume \eqref{3.6}.

Let $T$ be a pulsating front for \eqref{1.1} in direction $e$ and with speed $\ce(u,f)$, assume $z=0$ in Theorem \ref{T.3.1} (otherwise shift $T$ in time by $z/\ce(u,f)$), and let  $\tau\equiv M_\zeta\zeta/m\ge 1$.  Theorem \ref{T.2.1} and \eqref{3.5} show that there is $x\in\bbR^2$ and $t\in[0,\tau]$ such that either
\beq \lb{3.7}
\bbP_\Om \left((X_t^x-x)\cdot e \le -\alpha_\zeta \sqrt{\tau\diffe(u)} \right) > \frac{\zeta+\eps}{\xi-\eps}
\eeq
or
\beq \lb{3.8}
\bbP_\Om \left((X_t^x-x)\cdot e \ge \alpha_\zeta \sqrt{\tau\diffe(u)} \right) >  \frac{1-(\xi-\eps)e^{-t m/\zeta}}{1-(\zeta+\eps)}.
\eeq
We now claim that
\beq \lb{3.12}
\alpha_\zeta  \sqrt{\tau\diffe(u)}- \tfrac 1{10} \le \ce(u,f)t + 2\ce(u,f)C_0(m^{-1}+\eps^{-2})+8.
\eeq
If so, then $t\le\tau=M_\zeta\zeta/m$ gives
\[
\frac {C_1 \sqrt{m\diffe(u)}-C_2m}{C_3+C_4m}\le  \ce(u,f)
\]
with some $\zeta$-dependent positive constants. Since $\diffe(u)\ge 1$ by \eqref{1.6}, this then gives $C_5\sqrt{m\diffe(u)}\le \ce(u,f)$ for some $C_5>0$ and all small enough $m>0$, thus proving \eqref{1.7} and \eqref{1.7b} for all $m_\zeta(f)>0$.

It remains to prove \eqref{3.12}, and it is sufficient to consider $\alpha_\zeta  \sqrt{\tau\diffe(u)}\ge \tfrac 1{10}$. Assume first \eqref{3.7}. Due to spatial periodicity of $u$ we can assume for $x,t$ from \eqref{3.7},
\beq \lb{3.9}
\left| x\cdot e -\ce(u,f)t -\ce(u,f)C_0(m^{-1}+\eps^{-2})-4 \right|\le 1.
\eeq
We can also assume
\beq \lb{3.10}
T(t,x)\le \zeta+\eps
\eeq
at the expense of changing \eqref{3.7} to
\beq \lb{3.11}
\bbP_\Om \left((X_t^x-x)\cdot e \le \tfrac 1{10}-\alpha_\zeta \sqrt{\tau\diffe(u)} \right) > \frac{\zeta+\eps}{\xi-\eps}.
\eeq
This is because Theorem \ref{T.3.1} shows that the largest connected set $B$ of points satysfying \eqref{3.9} but not \eqref{3.10} and containing $x$ (if it is non-empty) has $\diam(B)< \tfrac 1{10}$  (recall that $z=0$). Hence almost sure continuity of $X_t^x$ in $t$ shows for any $\beta\le -\tfrac 1{10}$, 
\[
\bbP_\Om \left((X_t^x-x)\cdot e \le \beta \right) \le \sup_{\substack{y\in \partial B\\ s\in[0,t]}} \bbP_\Om \left((X_s^y-x)\cdot e \le \beta \right) \le \sup_{\substack{y\in \partial B\\ s\in[0,t]}} \bbP_\Om \left((X_s^y-y)\cdot e \le \beta +\tfrac 1{10} \right).
\]

Let $\Gamma$ be the union of all connected components of the set $\Gamma'\equiv \{y\in\bbR^2\,|\, T(0,y)\le \xi-\eps  \}$ lying entirely in $\{y\in\bbR^2 \,|\, y\cdot e\le -\ce(u,f)C_0(m^{-1}+\eps^{-2})-2\}$. Then by Theorem \ref{T.3.1}, each connected component of $\Gamma$ has diameter at most $\tfrac 1{10}$ and 
\beq \lb{3.11a}
\Gamma\supseteq \Gamma'\cap \{y\in\bbR^2 \,|\, y\cdot e\le -\ce(u,f)C_0(m^{-1}+\eps^{-2})-3\}.
\eeq
Let $\psi$ solve \eqref{1.3} on the domain $\bbR^2\setminus\Gamma$ with $\psi(0,y)=T(0,y)$ for $y\in \bbR^2\setminus\Gamma$ and $\psi(s,y)=T(s,y)$ ($\ge \xi-\eps$ because $T_t>0$) for $y\in \partial\Gamma$. Then similarly to \eqref{2.3} and \eqref{2.4},
\beq \lb{3.11b}
\psi(t,x) = \bbE_\Om \left( T(0,X^x_t)\chi_{\sigma>t} + T(t-\sigma,X^x_\sigma) \chi_{\sigma\le t} \right) \ge \bbE_\Om \left( T(0,X^x_t)\chi_{\sigma>t} + (\xi-\eps) \chi_{\sigma\le t} \right),
\eeq
with $\sigma=\sigma(\omega)\equiv \inf_{X^x_s(\omega)\in\Gamma} s$, and
\beq \lb{3.11c}
 0\le \psi(t,x) \le T(t,x)\le e^{tm/\zeta} \psi(t,x).
\eeq
If \eqref{3.12} is violated, then \eqref{3.9} and \eqref{3.11} show that 
\[
\bbP_\Om \left(X_t^x\cdot e \le -\ce(u,f)C_0(m^{-1}+\eps^{-2})-3 \right) > \frac{\zeta+\eps}{\xi-\eps},
\]
so that \eqref{3.11a} and \eqref{3.11b} yield $\psi(t,x)>\tfrac{\zeta+\eps}{\xi-\eps}(\xi-\eps) = \zeta+\eps$. But this contradicts \eqref{3.10} and \eqref{3.11c}, so \eqref{3.12} is valid and we are done in the case when \eqref{3.7} holds.

Let us now assume \eqref{3.8}. Here one uses a similar argument with \eqref{3.9}, \eqref{3.10}, and \eqref{3.11} replaced by 
\beq \lb{3.13}
\left| x\cdot e  + \ce(u,f)C_0(m^{-1}+\eps^{-2})+4 \right|\le 1,
\eeq
\beq \lb{3.14}
T(0,x)\ge \xi-\eps,
\eeq
\beq \lb{3.15}
\bbP_\Om \left((X_t^x-x)\cdot e \ge  \alpha_\zeta \sqrt{\tau\diffe(u) } -\tfrac1{10} \right) >  \frac{1-(\xi-\eps)e^{-t m/\zeta}}{1-(\zeta+\eps)}
\eeq
and $\Gamma$  the union of all connected components of the set $\Gamma'\equiv \{y\in\bbR^2\,|\, T(t,y)\ge \zeta+\eps  \}$ lying entirely in $\{y\in\bbR^2 \,|\, y\cdot e\ge \ce(u,f)t + \ce(u,f)C_0(m^{-1}+\eps^{-2})+2\}$. Again each connected component of $\Gamma$ has diameter at most $\tfrac 1{10}$ and 
\beq \lb{3.16}
\Gamma\supseteq \Gamma'\cap \{y\in\bbR^2 \,|\, y\cdot e\ge  \ce(u,f)t + \ce(u,f)C_0(m^{-1}+\eps^{-2})+3\}.
\eeq
If \eqref{3.12} is violated, then \eqref{3.13} and \eqref{3.15} show
\[
\bbP_\Om \left(X_t^x \cdot e \ge  \ce(u,f)t + \ce(u,f)C_0(m^{-1}+\eps^{-2})+3 \right) >  \frac{1-(\xi-\eps)e^{-t m/\zeta}}{1-(\zeta+\eps)},
\]
and so \eqref{3.16} and 
\[
\psi(t,x) = \bbE_\Om \left( T(0,X^x_t)\chi_{\sigma>t} + T(t-\sigma,X^x_\sigma) \chi_{\sigma\le t} \right) \le \bbE_\Om \left( T(0,X^x_t)\chi_{\sigma>t} + (\zeta+\eps) \chi_{\sigma\le t} \right)
\]
(with $\psi$ defined as above) yield
\[
\psi(t,x)< \frac{1-(\xi-\eps)e^{-t m/\zeta}}{1-(\zeta+\eps)}(\zeta+\eps) + \left( 1-\frac{1-(\xi-\eps)e^{-t m/\zeta}}{1-(\zeta+\eps)} \right) = (\xi-\eps) e^{-tm/\zeta}.
\]
This again contradicts \eqref{3.11c}, \eqref{3.14}, and $T_t>0$, so \eqref{3.12} is also valid when \eqref{3.8} holds.
\end{proof}

\section{Quenching by Cellular Flows} \lb{S4}

\begin{proof}[Proof of Theorem \ref{T.1.4}]
It is obviously equivalent to consider the problem on $\bbR\times\bbT$ with $T_0$ supported in $[-L,L]\times\bbT$, which is what we will do. It has been proved in \cite{Kor} that periodic non-degenrate cellular flows in two dimensions have $\diffe(Au)\sim A^{1/2}$ as $A\to\infty$ (the special case of $u_{\rm cell}$ has been treated earlier in \cite{FP}). It is therefore enough to prove that a 1-periodic symmetric non-degenerate cellular flow $u$ quenches solutions of \eqref{1.1} when 
\beq \lb{4.1}
\|f(s)/s\|_\infty\le \gamma_\tht
\eeq
and $T_0$ is supported in $[-b_\tht\sqrt{\diffe(u)}, b_\tht\sqrt{\diffe(u)}]\times\bbT$, with $e=e_1=(1,0)$ and some $b_\tht>0$. Having Theorem \ref{T.2.1} at hand, the proof is similar to that of Lemma~4.2 in \cite{ZlaPersym}. 

Let us prove the last claim. We only need to consider large enough $\diffe(u)$ because solutions supported in $[-b_\tht',b_\tht']$ are quenched for any $u$ provided \eqref{4.1} holds and $b_\tht'>0$ is small enough \cite[Theorem 1.1(ii)]{ZlaArrh}. By the comparison principle, it is sufficient to consider initial data
$T_0(x)\equiv\chi_{[-L,L]}(x_1)$ with $L\equiv \lfloor \tfrac{\tht^2}{64C} \sqrt{\diffe(u)}-\tfrac\tht 2 \rfloor$  and $C$ from Theorem~\ref{T.2.1} (then we can take $b_\tht\equiv \tht^2(128C)^{-1} \min \{1, 2b_\tht'/3 \}$ and have $b_\tht\sqrt{\diffe(u)}\le \max \{b_\tht',L\}$). Let $\psi$ be
the solution of \eqref{1.3} with initial datum $\psi_0\equiv T_0$. We first claim that 
there is a continuous curve $h:[0,1]\to [0,1]\times\bbT$
such that $(h(0))_1=0$ and $(h(1))_1=1$ , and for all $s\in[0,1]$
and $t\ge 1$,
\begin{equation} \lb{4.8}
\psi(t,h(s)) \le \frac \tht 4.
\end{equation}

To this end we let $\phi$ be the solution of \eqref{1.3} with
initial condition $\phi(0,x)\equiv\chi_{[-K-2,K]}(x_1)$ where $K\equiv \lceil
8L\tht^{-1} \rceil$.  Theorem \ref{T.2.1} with 
\[
\alpha\equiv \frac{K+2}{\sqrt{\diffe(u)}} \le \frac{8L+4\tht}{\tht \sqrt{\diffe(u)}} \le \frac{\tht}{8C} 
\]
 shows that there are $\tau\le 1$ and $y\in[-1,0]\times\bbT$ such that
\[
\phi(\tau,y)=\bbP_\Omega \big((X^{y}_\tau)_1\in [-K-2,K] \big) \le
\frac\tht 8.
\]
The maximum principle for \eqref{1.3} implies that the connected
component of the set
\[
\{(t,x)\in[0,\tau]\times \bbR \times \bbT \,|\,\phi(t,x)\le\tfrac\tht 8\}
\]
containing $(\tau,y)$ must intersect
\[
\{x\in \bbR\times\bbT \,|\,\phi(0,x)\le\tfrac\tht 8\} =
(\bbR\setminus[-K-2,K])\times\bbT.
\]
Since by symmetry $\phi(t,x_1,x_2)=\phi(t,-2-x_1,x_2)$ for $x_1\ge
0$, this means that there is a curve $h(s)$ joining
$\{0\}\times\bbT$ and $\{K\}\times\bbT$ such that for each $s$ there
is $\tau_s\le\tau$ with 
\beq \lb{4.3}
\phi(\tau_s,h(s))=\bbP_\Omega \big((X^{h(s)}_{\tau_s})_1\in [-K-2,K] \big) \le \frac\tht 8.
\eeq
Lemma \ref{2.1}(iii) in \cite{ZlaPersym} shows for any $M\in\bbN$, $t\ge 0$, and any 1-periodic symmetric flow,
\[
\bbP_\Omega \big( |X_t^y|\le M \big) \le \left\lceil \frac{|y_1|}{M} \right\rceil^{-1}.
\]
Applying this with $M=L$, $y=X^{h(s)}_{\tau_s}$ and $t-\tau_s$ (for $t\ge 1\ge\tau_s$) in place of $t$, we obtain using \eqref{4.3} for any $t\ge 1$ and $s$,
\[
\psi(t,h(s))=\bbP_\Omega \big(|X^{h(s)}_t|\le L] \big) \le \frac\tht 8 + \Big(1-\frac\tht 8 \Big) \left\lceil \frac{K}{L} \right\rceil^{-1} \le \frac\tht 8 + \Big(1-\frac\tht 8 \Big) \frac\tht 8 \le \frac \tht 4
\]
which is \eqref{4.8} (after reparametrization of $h$ and restriction
to $s\in[0,1]$).

Symmetry of $u$ and $\psi_0$ implies that \eqref{4.8} holds for
$h(s)$ extended to $s\in[-1,1]$ by $h(-s)=(-(h(s))_1,(h(s))_2)$.
Finally, \eqref{4.8} applies to $h(s)$ extended periodically (with
period 2) onto $\bbR$. The last claim holds because
$\psi(t,x)\ge\psi(t,x+(2,0))$ when $x_1\ge -1$ (and
$\psi(t,x)\ge\psi(t,x-(2,0))$ when $x_1\le 1$). This in turn
follows because $\psi(t,x)-\psi(t,x+(2,0))$ solves \eqref{1.3} with
initial datum that is symmetric across $x_1=-1$ and non-negative on
$[-1,\infty)\times\bbT$, and hence stays such by the symmetry of $u$ across $x_1=-1$ (the latter is due to the symmetry of $u$ across $x_1=0$ and periodicity).

This means that $\|\psi(t+1,\cdot)\|_\infty\le
\|\eta(t,\cdot)\|_\infty+ \tfrac \tht 4$ where $\eta$ is the solution of
\eqref{1.3} on the torus $[-1,1]\times\bbT$ (with $-1$ and $1$ identified) with $\eta(0,x)\equiv 1$ and
$\eta(t,h(s))=0$ for $t>0$ and $s\in[-1,1]$.
Then Lemma 2.3 in \cite{ZlaPersym} shows that there is a universal constant $\del>0$ such that
$\|\psi(t+1,\cdot)\|_\infty\le 2e^{-\del t} + \tfrac \tht 4$. We let $\tau_\tht\equiv \tfrac 1\del \ln \tfrac 8\tht +1$ so that $\|\psi(\tau_\tht,\cdot)\|_\infty\le \tfrac \tht 2$. If now $\gamma_\tht\equiv \tau_\tht^{-1}\ln 2$, then 
\[
\|T(\tau_\tht,\cdot)\|_\infty \le e^{\gamma_\tht \tau_\tht} \|\psi(\tau_\tht,\cdot)\|_\infty \le \tht.
\]
The maximum principle then implies that for $t\ge \tau_\tht$ the function $T$ solves \eqref{1.3} with $\|T(t,\cdot)\|_\infty\le\tht$,  and quenching follows.
\end{proof}

\section{A Counterexample in 3D} \lb{S5}

In this last section we show that our main result, Theorem \ref{T.1.1} does not hold in three and more dimensions. The counterexample we provide also shows a {\it breaking of symmetry} in more dimensions. Specifically, despite the fact that $\diffe(u) = D_{-e}(u)$ for any $e$ and $u$, it is not true that $\ce(u,f)c_{-e}^*(u,f)^{-1}$ is bounded by $u$-independent positive constants when $d\ge 3$. 


\begin{theorem} \lb{T.5.1}
Assume $d\ge 3$, $e=e_1$, and $f$ is any KPP reaction.  Then there is no $C_1>0$ such that for any  1-periodic incompressible mean-zero $C^{1,\del}$ flow $u$ on $\bbR^d$ we have $\ce(u,f)\ge C_1\sqrt{\diffe(u)}$, and there is no $C_2<\infty$ such that for any such $u$ we have $\ce(u,f)\le C_2\sqrt{\diffe(u)}$.
\end{theorem}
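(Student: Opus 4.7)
My plan is to construct, in dimension $d = 3$ with $e = e_1$ and $f$ a fixed KPP reaction, a one-parameter family of $1$-periodic incompressible mean-zero $C^{1,\delta}$ flows $\{u^{(A)}\}_{A \ge 1}$ on $\bbR^3$ for which the ratio $c^*_e(u^{(A)}, f) / c^*_{-e}(u^{(A)}, f)$ is unbounded as $A \to \infty$. Since $D_e(u) = D_{-e}(u)$ always holds, a single such family yields both claimed failures of \eqref{1.7}: whichever direction attains the larger front speed violates any hypothetical uniform upper bound $c^*_e \le C_2 \sqrt{D_e}$, and the other direction violates any hypothetical uniform lower bound $c^*_e \ge C_1 \sqrt{D_e}$. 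The ``symmetry-breaking'' reformulation of Theorem \ref{T.5.1} in the paragraph preceding its statement is therefore the heart of the matter.

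For a KPP reaction I would use the Berestycki--Hamel--Nadirashvili variational characterization from \cite{BHN-1},
\[
c^*_e(u, f) = \inf_{\lambda > 0} \frac{\mu(\lambda, e, u)}{\lambda},
\]
where $\mu(\lambda, e, u)$ is the principal periodic eigenvalue on $\bbT^3$ of a scalar linear elliptic operator built from $u$, $e$, $\lambda$, and $f'(0)$, together with the explicit cell-problem expression $D_e(u) = 1 + \|\nabla \chi_e\|_{L^2(\bbT^3)}^2$ from \eqref{1.5}--\eqref{1.6}. The key structural observation is that $D_e(u)$ is invariant under $e \mapsto -e$ (the cell problem \eqref{1.5} simply flips the sign of $\chi_e$), whereas $\mu(\lambda, e, u)$ is generically \emph{not}: the eigenvalue problem involves the combination $u \cdot \nabla \pm \lambda\, e \cdot \nabla$ together with a zero-order coupling $\mp \lambda\, u \cdot e$, and reversing $e$ changes the chirality of these interactions in a way that admits nontrivial examples only in $d \ge 3$.

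The candidate family I have in mind is a three-dimensional ``one-way channel'' flow: a narrow tube aligned with $e$ in which $u^{(A)}$ carries a strong drift of amplitude $A$ in the $+e$ direction, completed by a slow incompressible return flow and a cellular background that guarantee smoothness, periodicity, and the mean-zero property. Inserting into the variational formula an exponential trial function concentrated near the tube should give $c^*_e(u^{(A)}, f) \gtrsim A^\alpha$ for some $\alpha > \tfrac{1}{2}$, while the same flow traversed in the opposite direction is obstructed by the cellular background and yields $c^*_{-e}(u^{(A)}, f) = o(A^\alpha)$. The cell problem for $\chi_e$ is insensitive to the sign of $e$, and direct $L^2$-energy estimates should produce $D_e(u^{(A)}) \sim A^{2\beta}$ with a tube-thickness-controlled exponent $\beta < \alpha$. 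This delivers $c^*_e / \sqrt{D_e} \to \infty$ on one side and $c^*_{-e} / \sqrt{D_{-e}} \to 0$ on the other, yielding both halves of the theorem simultaneously.

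The hard part will be the lower bound on $c^*_e(u^{(A)}, f)$ together with the matching upper bound on $c^*_{-e}(u^{(A)}, f)$. Upper bounds on $c^*_e$ follow straightforwardly from trial functions in the principal-eigenvalue Rayleigh quotient, or equivalently from exhibiting an explicit KPP supersolution. The lower bound requires ruling out a small optimizing $\lambda$ in the variational formula; in probabilistic language this is a sharpening of the large-deviation rate function $I_{e, u^{(A)}}$ of the stochastic process \eqref{2.1} away from its Gaussian effective-diffusivity approximation $c^2 / (4 D_e)$ discussed just before the statement of the theorem. Such a sharpening is precisely what Theorem \ref{T.3.1} (and therefore Theorem \ref{T.1.1}) rules out in two dimensions via a topological width--speed constraint; the ``channel plus background'' construction in three dimensions is engineered to exploit the absence of this planar rigidity. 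Once the lower bound is established for one direction, the failure in the opposite direction follows automatically from the same flow, completing the counterexample.
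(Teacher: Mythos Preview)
Your high-level strategy---exploit the asymmetry $c^*_e(u,f)\neq c^*_{-e}(u,f)$ against the symmetry $D_e(u)=D_{-e}(u)$---is the same as the paper's. But what you have written is a plan, not a proof: you explicitly flag the lower bound on $c^*_e(u^{(A)},f)$ as ``the hard part'' and never carry it out, and you specify neither the exponents $\alpha,\beta$ nor the trial functions. Without those estimates nothing is established.

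The paper's argument is both simpler and complete, and it avoids exactly the obstacle you identify. Instead of a channel-plus-return-flow-plus-cellular-background construction, it uses a plain \emph{shear flow} $Au_R(x)=(A(\chi_R(x_2,x_3)-1),0,0)$, where $\chi_R$ is a smoothed indicator of a disc of radius $R$, normalized to have mean $1$ on $\bbT^2$. For shear flows the first-integral space $\calI=\{w\in H^1(\bbT^d):u\cdot\nabla w=0\}$ is just $H^1(\bbT^2)$, and the paper invokes the asymptotic formulas from \cite{ZlaPercol}:
\[
\lim_{A\to\infty}\frac{c^*_e(Au_R,f)}{A}=\sup_{\substack{w\in H^1(\bbT^2)\\ \|\nabla w\|_2^2\le f'(0)\|w\|_2^2}}\frac{\int_{\bbT^2}u_R\, w^2}{\|w\|_2^2},
\qquad
\lim_{A\to\infty}\frac{\sqrt{D_e(Au_R)}}{A}=\sup_{w\in H^1(\bbT^2)}\frac{\int_{\bbT^2}u_R\, w}{\|\nabla w\|_2}.
\]
The whole mechanism is now transparent: the speed functional is \emph{quadratic} in $w$ while the diffusivity functional is \emph{linear}, and this mismatch does all the work. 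A logarithmic test function gives the diffusivity limit $\gtrsim\sqrt{\log(1/R)}$; since $-u_R\le 1$, the $-e$ speed limit is $\le 1$, proving the first claim. For the second, taking $w_R$ to be the diffusivity maximizer shifted by a constant (so the gradient constraint is met) and applying Cauchy--Schwarz to $\int\chi_R w_R^2\ge(\int\chi_R w_R)^2/\int\chi_R$ yields the $+e$ speed limit $\gtrsim\log(1/R)$, i.e.\ at least the square of the diffusivity limit.

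This route sidesteps precisely the difficulty you flag: there is no need to ``rule out a small optimizing $\lambda$'' in the BHN eigenvalue formula, because the \cite{ZlaPercol} asymptotics have already digested that infimum into a single explicit sup over $H^1(\bbT^2)$, where lower bounds come for free from test functions. Your channel-with-cellular-background flow may be workable in principle, but the cellular component is unnecessary, and attacking the raw principal-eigenvalue problem is substantially harder than the variational shortcut the paper uses.
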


\begin{proof}
We will use the following two results from \cite{ZlaPercol}:
\begin{align*}
\lim_{A\to\infty} \frac {\ce(Au,f)} A & = \sup_{\substack{w\in\calI \\
\|\nabla w\|_2^2\le f'(0)\|w\|_2^2}} \frac{\int_{\bbT^d}(u\cdot e)w^2\,dx}{\|w\|_2^2}, \\
\lim_{A\to\infty} \frac {\sqrt{\diffe(Au)}} {A} & = \sup_{w\in\calI}
\frac{\int_{\bbT^d}(u\cdot e)w\,dx}{\|\nabla w\|_2},
\end{align*}
where
\begin{equation*}
\calI \equiv \big\{ w\in H^1(\bbT^d) \,\,\,\big|\,\,\,  u\cdot\nabla w = 0 \big\}.
\end{equation*}
We let $e\equiv e_1$ and we will assume $d=3$ since the general case is analogous. Let $\chi\ge 0$ be a smooth characteristic function of the unit disc in $\bbR^2$ and for small $R>0$ let $\chi_R(x_2,x_3)\equiv R^2\|\chi\|_1^{-1} \sum_{j,k\in\bbZ^2} \chi(\tfrac {x_2-j}R,\tfrac{x_3-k}R)$. We will consider the mean-zero periodic shear flows $Au_R(x)=(Au_R(x_2,x_3),0,0)$ with $A\in\bbR$ and $u_R(x_2,x_3)\equiv\chi_R(x_2,x_3)-1$. In this case the elements of $\calI$ are precisely the $w\in H^1(\bbT^3)$ which are independent of $x_1$ and the above formulae become
\begin{align}
\lim_{A\to\infty} \frac {\ce(Au_R,f)} {|A|} & = \sup_{\substack{w\in\calI' \\
\|\nabla w\|_2^2\le f'(0)\|w\|_2^2}} \frac{\int_{\bbT^2}u_Rw^2\,dx_2dx_3}{\|w\|_2^2}, \lb{5.1}\\
\lim_{A\to-\infty} \frac {\ce(Au_R,f)} {|A|} & = \sup_{\substack{w\in\calI' \\
\|\nabla w\|_2^2\le f'(0)\|w\|_2^2}} \frac{\int_{\bbT^2}(-u_R)w^2\,dx_2dx_3}{\|w\|_2^2}, \lb{5.2}\\
\lim_{A\to\pm\infty} \frac {\sqrt{\diffe(Au_R)}} {|A|} & = \sup_{w\in\calI'}
\frac{\int_{\bbT^2}u_Rw\,dx_2dx_3}{\|\nabla w\|_2}, \lb{5.3}
\end{align}
where $\calI' \equiv  H^1(\bbT^2)$ and $e=e_1$.

For $(x_2,x_3)\in[-\tfrac12,\tfrac 12]^2=\bbT^2$ and $r=\sqrt{x_2^2+x_3^2}$ let $w_R(x_2,x_3)=\max\{0,\min\{\log\tfrac 1{2R},\log\tfrac 1{2r}\}\}$. Taking $w\equiv w_R$ we see that RHS of \eqref{5.3}  $\gtrsim \sqrt{\log\tfrac 1R}$ for small $R$.  On the other hand, $-u_R\le 1$ and so RHS of \eqref{5.2} $\le 1$. This proves the first claim.

Now let $\til w_R$ be the maximizer of the RHS of \eqref{5.3}, which exists by \cite{ZlaPercol}, normalized by $\|\nabla \til w_R\|_2=1$ and $\int_{\bbT^2} \til w_R \,dx_2dx_3=0$. The Poincar\' e inequality then gives $\|\til w_R\|_2^2\le \til C$ for some $\til C\ge 1$. Let $C\equiv \max\{\til C,(f'(0))^{-1}\}$, choose $K_R\in\bbR$ so that $\|\til w_R+K_R\|_2^2=C$, and define $w_R\equiv \til w_R+K_R$. Then $1=\|\nabla w_R\|_2^2=\|w_R\|_2^2/C \le f'(0)\|w_R\|_2^2$ (so $w_R$ enters in the RHS of \eqref{5.1}) and $w_R$ also maximizes the RHS of \eqref{5.3}. Schwarz inequality, $\|w_R\|_2^2=C$, $u_R=\chi_R -1$, and the previous paragraph then imply
\begin{equation}\lb{5.4}
\int_{\bbT^2} \chi_Rw_R\,dx_2dx_3 + \sqrt C \ge \int_{\bbT^2}u_Rw_R\,dx_2dx_3 = \text{RHS of \eqref{5.3}} \gtrsim \sqrt{\log\tfrac 1R}
\end{equation}
for small $R$. But then taking $w\equiv w_R$ in \eqref{5.1} and using Schwarz inequality, $\int_{\bbT^2}\chi_R\,dx_2dx_3=1$, and \eqref{5.4} gives 
\[
\text{RHS of \eqref{5.1}}\ge \tfrac 1C \int_{\bbT^2}\chi_Rw_R^2\,dx_2dx_3 -1 \ge \frac { \bigg(\int_{\bbT^2}\chi_Rw_R\,dx_2dx_3\bigg)^2} {C\int_{\bbT^2}\chi_R\,dx_2dx_3}-1 \gtrsim (\text{RHS of \eqref{5.3}})^2 
\]
for small $R$. This together with \eqref{5.4} proves the second claim and we are done.
\end{proof}




\begin{thebibliography}{99}



\bibitem{ABP} B.~Audoly, H.~Berestycki and Y.~Pomeau, 
\it R\'eaction diffusion en \'ecoulement stationnaire rapide,
\rm C. R. Acad. Sci. Paris {\bf 328}, S\'erie IIb (2000), 255--262.


\bibitem{BH} H.~Berestycki and F.~Hamel, 
\it Front propagation in periodic excitable media, 
\rm Comm. Pure and Appl. Math. {\bf 55} (2002), 949--1032.

\bibitem{Berrev} H.~Berestycki, 
\it The influence of advection on the propagation of fronts in reaction-diffusion equations, 
\rm Nonlinear PDEs in Condensed Matter and Reactive Flows, NATO Science Series C, 569, H. Berestycki and Y. Pomeau eds, Kluwer, Doordrecht, 2003.

\bibitem{BHN-1} H. Berestycki, F. Hamel and N. Nadirashvili,
\it The speed of propagation for KPP type problems, I - Periodic framework, 
\rm J. European Math. Soc. {\bf 7} (2005), 173--213.

\bibitem{BHN-2} H. Berestycki, F. Hamel and N. Nadirashvili,
\it Elliptic eigenvalue problems with large drift and applications to nonlinear propagation phenomena, 
\rm Comm. Math. Phys. {\bf 253} (2005), 451--480.


\bibitem{BGW} R.N. Bhattacharya, V.K. Gupta and H.F. Walker, 
\it Asymptotics of solute dispersion in periodic porous media,  
\rm SIAM J. Appl. Math. {\bf 49} (1989), 86--98.

\bibitem{CKOR} P. Constantin, A. Kiselev, A. Oberman and L. Ryzhik,
\it Bulk burning rate in passive-reactive diffusion, 
\rm Arch. Ration. Mech. Anal. {\bf 154} (2000), 53--91.

\bibitem{CKR} P.~Constantin, A.~Kiselev and L.~Ryzhik, 
\it Quenching of flames by fluid advection, 
\rm Comm. Pure Appl. Math. {\bf 54} (2001), 1320--1342.

\bibitem{CKRZ} P.~Constantin, A.~Kiselev, L.~Ryzhik and A.~Zlato\v s, 
\it Diffusion and Mixing in Fluid Flow, 
\rm  Ann.~of~Math.~(2) {\bf 168} (2008), 643--674.

\bibitem{ElSmaily} M.~El Smaily
\it Min-max formulae for the speeds of pulsating traveling fronts in heterogeneous media,
\rm preprint.

\bibitem{FKR} A.~Fannjiang, A.~Kiselev and L.~Ryzhik,
\it Quenching of reaction by cellular flows, 
\rm  Geom. Funct. Anal. {\bf 16}  (2006), 40--69.

\bibitem{FP}  A.~Fannjiang and G.~Papanicolaou,
\it Convection enhanced diffusion for periodic flows, 
\rm SIAM Jour. Appl. Math. {\bf 54} (1994), 333--408.

\bibitem{Heinze} S.~Heinze, 
\it Large convection limits for KPP fronts, 
\rm Max Planck Institute for Mathematics Preprint Nr. 21/2005, 2005.


\bibitem{JKO}  V.V.~Jikov, S.M.~Kozlov and O.A.~Oleinik, 
\it Homogenization of Differential Operators and Integral Functionals, 
\rm Chapter 2, Springer-Verlag, Berlin, 1994.

\bibitem{KRS} L.~Kagan, P.D.~Ronney, and G.~Sivashinsky, 
\it Activation energy effect on flame propagation in large-scale vortical flows,
\rm  Combust. Theory Model.  {\bf 6}  (2002),  479--485.

\bibitem{KS} L.~Kagan and G.~Sivashinsky, 
\it Flame propagation and extiction in large-scale vortical flows,
\rm  Combust. Flame.  {\bf 120}  (2000),  222--232.

\bibitem{KR} A.~Kiselev and L.~Ryzhik, 
\it Enhancement of the traveling front speeds in reaction-diffusion equations with advection, 
\rm Ann. Inst. H. Poincar\' e Anal. Non Lin\' eaire {\bf 18} (2001), 309--358.

\bibitem{KZ} A.~Kiselev and A.~Zlato\v s, 
\it Quenching of combustion by shear flows, 
\rm Duke~Math.~J. {\bf 132} (2006), 49--72.

\bibitem{KPP} A.N.~Kolmogorov, I.G.~Petrovskii and N.S.~Piskunov, 
\it \'Etude de l'\'equation de la chaleur de mati\`ere et son application \`a un probl\`eme biologique, 
\rm Bull. Moskov. Gos. Univ. Mat. Mekh. {\bf 1} (1937), 1--25.

\bibitem{Kor} L.~Koralov, 
\it Random perturbations of 2-dimensional Hamiltonian flows, 
\rm Probab. Theory Related Fields {\bf 129} (2004), 37--62.

\bibitem{KM} P.~Kramer and A.~Majda, 
\it Simplified models for turbulent diffusion: theory, numerical modelling, and physical phenomena, 
\rm Chapter 2, Phys. Rep. {\bf 314} (1999), 237--574.


\bibitem{Norris} J.R.~Norris, 
\it Long-time behaviour of heat flow: global estimates and exact asymptotics, 
\rm Arch. Rational Mech. Anal. {\bf 140} (1997), 161--195.

\bibitem{NR} A.~Novikov and~L. Ryzhik, 
\it Bounds on the speed of propagation of the KPP fronts in a cellular flow, 
\rm Arch. Rat. Mech. Anal. {\bf 184} (2007), 23--48.

\bibitem{Oks} B.~{\O}ksendal, 
\it Stochastic Differential Equations,
\rm Springer-Verlag, Berlin, 1995.


\bibitem{RZ} L.~Ryzhik and A.~Zlato\v s,
\it KPP pulsating front speed-up by flows, 
\rm  Commun. Math. Sci. {\bf 5} (2007), 575--593.

\bibitem{Sm} J.~Smoller, 
\it Shock Waves and Reaction-Diffusion Equations,
\rm Springer-Verlag, New York, 1994.

\bibitem{VCKRR} N.~Vladimirova, P.~Constantin, A.~Kiselev, O.~Ruchayskiy and L.~Ryzhik,
\it Flame enhancement and quenching in fluid flows, 
\rm Combustion Theory and Modelling {\bf 7} (2003), 487--508.


\bibitem{Xin3} J.~Xin,
\it Existence of planar flame fronts in convective-diffusive media,
\rm Arch. Rational Mech. Anal. {\bf 121} (1992), 205--233.



\bibitem{ZlaArrh} A.~Zlato\v s, 
\it Quenching and propagation of combustion without ignition temperature cutoff, 
\rm Nonlinearity {\bf 18} (2005), 1463--1475.

\bibitem{ZlaPersym} A.~Zlato\v s, 
\it Pulsating front speed-up and quenching of reaction by fast advection, 
\rm Nonlinearity {\bf 20} (2007), 2907--2921.  

\bibitem{ZlaMix} A.~Zlato\v s, 
\it Diffusion in fluid flow: Dissipation enhancement by flows in 2D, 
\rm preprint.

\bibitem{ZlaPercol} A.~Zlato\v s, 
\it Sharp asymptotics for KPP pulsating front speed-up and diffusion enhancement by flows, 
\rm Arch. Ration. Mech. Anal., to appear.


\end{thebibliography}
\end{document}